\newtheorem{theorem}{Theorem}[section]
\newtheorem{lemma}[theorem]{Lemma}
\newtheorem{proposition}[theorem]{Proposition}
\theoremstyle{definition}
\newtheorem{definition}[theorem]{Definition}
\newtheorem{remark}[theorem]{Remark}
\DeclareMathOperator{\supp}{supp}
\DeclarePairedDelimiter{\abs}{\lvert}{\rvert}
\DeclareMathOperator{\rank}{rank}
\begin{document}

\title{A corona theorem for an algebra of Radon measures with an application to exact controllability for linear controlled delayed difference equations}

\author{S\'ebastien Fueyo\thanks{School of Electrical Engineering, 
   Tel Aviv University, Ramat Aviv 69978, Israel.} \and Yacine Chitour\thanks{Universit\'e Paris-Saclay, CNRS, CentraleSup\'elec, Laboratoire des signaux et syst\`emes, 91190, Gif-sur-Yvette, France.}}

\maketitle

\begin{abstract} 
This paper proves a corona theorem for the algebra of Radon measures compactly supported in $\mathbb{R}_-$ and this result is applied to provide a necessary and sufficient Hautus--type frequency criterion for the $L^1$ exact controllability of linear controlled delayed difference equations (LCDDE). Hereby, it solves an open question raised in \cite{chitour:hal-03827918}.
\end{abstract}

%% Abstract in the other language
% \begin{altabstract} 
%   Ce document est un petit guide d'utilisation de la classe \LaTeX
%   pour les articles de \emph{\currentjournaltitle}.
% \end{altabstract}

% Use the \maketitle command after the abstract

\section{Introduction}

Corona problems are relevant in linear infinite-dimensional control theory, especially for delay equations see \cite{YamamotoWillems,chitour:hal-03827918}. Exact controllability in finite time is often characterized in terms of a B\'ezout identity over appropriate functional algebras and hence obtaining an exact controllability criterion is tantamount to the resolution of a corona problem for measures or distributions compactly supported algebras.

Since the resolution of the corona problem in one dimension for holomorphic bounded functions in the unit disk by the celebrated paper \cite{Carleson1962}, the corona problem received a large attention. Carleson's result has been extended in various way, as for more general domains or algebras, see for instance a matrix version in the polydisk \cite{treil2005matrix} or in a multiply connected domains \cite{brudnyi2000matrix}, for some functions algebra on planar domains \cite{mortini2014corona} or for the algebra of almost periodic function with a Bohr--Fourier negatively supported \cite{frentz2014subalgebra}. The most closely corona theorem  related to the controllability of difference delay equations is stated in \cite[Corollary 3.3]{maad2011generators} for distributions positively compactly supported, but at the current state of the literature, it does not apply directly 
to the exact controllability of linear controlled delayed difference equations (LCDDE).

% Some recent papers undertook the purpose to give corona theorems for the algebra of almost periodic functions, see for instance \cite{frentz2014subalgebra,bottcher1999corona}, 
% but at the current state of the literature, they do not apply directly 
% to the exact controllability of linear controlled difference equations (LCDE). 

In this paper, we establish two results. The first one consists in the resolution of a corona theorem for a subalgebra of $M(\mathbb{R}_-)$, the commutative Banach algebra made of Radon measures compactly supported in $\mathbb{R}_-$. More precisely, for a finite number of $f_1,...,f_N$, each of them being a finite sum of Dirac measures supported in $\mathbb{R}_-$, we give a necessary and sufficient condition on the Laplace transform of the measures $f_1,...,f_N$ to obtain the existence of $g_1,...,g_N \in M(\mathbb{R}_-)$ such that
\begin{equation}
\label{eq_intro}
    f_1*g_1+...+f_N*g_N=\delta_0,
\end{equation}
where $*$ denotes the convolution product and $\delta_0$ the Dirac distribution at zero.
That result is then used to derive an $L^1$ exact controllability criterion (in finite time) for LCDDE expressed in the frequency domain, thus solving an open question raised in \cite{chitour:hal-03827918}. We emphasize that LCDDE can sometimes be used to address some control theoretic questions for 1-D hyperbolic partial differential equations \cite{CoNg,baratchart}.
%and it follows that the result of this paper might be pertinent in the study of the controllability of PDE's.

The strategy of proof for the corona problem goes as follows : in a first step, we reduce the corona problem~\eqref{eq_intro} to a corona problem in a quotient Banach algebra. The second step goes by contradiction and relies on  Gelfand representation theory characterizing maximal ideal as the kernel of homomorphisms, in the spirit of \cite{frentz2014subalgebra,bottcher1999corona}. It is not immediate how to deduce our corona theorem from these references and we include a proof of it for sake of clarity (yet very similar to that of \cite{frentz2014subalgebra}). 
% However, the corona theorem for $\mathrm{APW}_-$  is not enough to solve our controllability problem and we crucially need to achieve a corona theorem for the smaller algebra $\mathrm{APW}_-^{\rm{bd}}$. This is done in a second step, relying on a result of \cite{kamen1976module}.
As for our second main result, it answers a question raised in \cite{chitour:hal-03827918}
where the sufficiency of a frequency domain criterium for $L^1$ exact controllability of a LCDDE was reduced to establishing the  corona theorem established previously. 

% The proof of the first result follows a standard strategy: in a first step, we establish a corona theorem for $\mathrm{APW}_-$ the commutative Banach algebra made of almost periodic functions with summable Fourier series, which is not a truly original result at the light of the \cite{frentz2014subalgebra,bottcher1999corona}. It is not immediate how to deduce that particular corona theorem from these references and we included a proof of it for sake of clarity (yet very similar to that of \cite{bottcher1999corona}). However, the corona theorem for $\mathrm{APW}_-$  is not enough to solve our controllability problem and we crucially need to achieve a corona theorem for the smaller algebra $\mathrm{APW}_-^{\rm{bd}}$. This is done in a second step, relying on a result of \cite{kamen1976module}. As for our second main result, it answers a question raised in \cite{chitour:hal-03827918}
% where the sufficiency of a frequency domain criterium for $L^1$ exact controllability of an LCDE was reduced to establishing the  corona theorem in $\mathrm{APW}_-^{\rm{bd}}$ established previously. 

\section{Prerequisites and definitions}

We introduce the notations and the distributional framework needed in this article.

\subsection{Notations}
\label{sec_not}

In this paper, we denote by $\mathbb{N}$ and $\mathbb{N}^*$ the sets of nonnegative and 
positive integers, respectively. The set $\left\{1,\dots,N \right\}$ is denoted by $\llbracket 1,N\rrbracket$ for any 
$N \in \mathbb{N}^*$. We use $\mathbb{R}$, $\mathbb{R}_+=[0,+\infty)$, $\mathbb{R}_+^*$, $\mathbb{R}_-=(-\infty,0]$ and $\mathbb{C}$ to denote the sets of real numbers, 
nonnegative, positive, nonpositive real numbers and complex numbers respectively. For $s \in \mathbb{C}$, $\Re(s)$ and $\Im(s)$ denote the real and imaginary part of $s$, respectively. 
% For $\alpha \in \mathbb{N}$, we define $\mathbb{C}_{\alpha}$ the space of complex numbers with a real part stricly less than $\alpha$. We note $H(\mathbb{C})$ the space of holomorphic functions on $\mathbb{C}$.For $\alpha \in \mathbb{N}$, the Hardy space $\mathcal{H}^{\infty}(\mathbb{C}_{\alpha})$ is defined as the space of bounded holomorphic functions $h$ defined on $\mathbb{C}_{\alpha}$, equipped with the  norm 
% $ \| \cdot\|_{\alpha}$ given by
% \begin{equation}
% \label{eq:norm_infi}
%     \| h\|_{\alpha}:= \underset{s \in \mathbb{C}_{\alpha}}{\sup} \abs*{h(s)}.
% \end{equation}
% For $t \in \mathbb{R}$, we note $\mathcal{e}_t$ the map $s\to e^{st}$ for $s \in \mathbb{C}$. 
 % We denote by $\mathbbm{1} \in \mathcal{H}^{\infty}(\mathbb{C})$ the constant function equal to $1$.
%  Note that $\mathcal{e}_t\in \mathcal{H}^{\infty}(\mathbb{C}_{\alpha})$ for all $\alpha \in \mathbb{N}$ and $t\in \mathbb{R}_+$.

\subsection{Radon measures framework}
\label{sec_dis}
We give the Radon measures spaces that we use in this paper and for further details see for instance \cite[Section 2]{chitour:hal-03827918}. Denote $C_0(\mathbb{R})$ and $C_0(\mathbb{R}_+)$ the Fréchet spaces of continuous functions with the topology induced by the uniform convergence on compact sets on $\mathbb{R}$ and $\mathbb{R}_+$ respectively. The (topological) support of a function $\phi \in C_0(\mathbb{R})$ is the closure of the set $\left\{x \in \mathbb{R} |\, f(x) \neq 0 \right\}$. We note by $M(\mathbb{R}_-)$ and $M_+(\mathbb{R})$ the spaces of Radon measures defined on $\mathbb{R}$ with compact support included in $\mathbb{R}_-$ and bounded on the left respectively. The support of a Radon measure $\alpha \in M_+(\mathbb{R}) $, denoted $\supp( \alpha)$, is the complement of the largest open set on which $\alpha$ is zero. 
% For an element $h$ belonging to $M(\mathbb{R}_-)$ or $M_+(\mathbb{R})$, we note by $l(h)$ the infimum of the support.
We note $\delta_{\lambda} \in M(\mathbb{R}_-)$ the Dirac distribution at $\lambda \in \mathbb{R}_-$. Endowed with the convolution $*$, the two spaces  $M_+(\mathbb{R})$ and $M(\mathbb{R}_-)$ become commutative unital algebras where the unit is $\delta_0$. 
%Furthermore the convolution between a Radon measure in $M_+(\mathbb{R})$ and a continuous function in $C_0(\mathbb{R})$ with a compact support is a function in $C_0(\mathbb{R})$.

For $T>0$, we denote by $\Omega_{-}^{T}$ the subspace  of $M(\mathbb{R}_-)$ 
made of the elements $h\in M(\mathbb{R}_-)$ of the form
\begin{equation}
\label{eq:omega}
    h=\sum_{j=0}^{N} h_j \delta_{-\lambda_j},\, \lambda_j\in [0,T],\,   h_j \in \mathbb{R},\,N \in \mathbb{N},
\end{equation}
where we assumed with no loss of generality that $\lambda_i \neq \lambda_j$ when $i \neq j$.
We introduce the subalgebra $\Omega^{\mathrm{bd}}_-:= \underset{T \in \mathbb{R}_+}{\cup} \Omega_{-}^{T}$ of $M(\mathbb{R}_-)$. 
% The vector space $\Omega_{-}^{\Lambda_T}$ is used to denote the subspace of $M(\mathbb{R}_-)$ made of the elements $\beta\in M(\mathbb{R}_-)$ of the form
% \begin{equation}
%     \beta=\sum_{j=0}^{+\infty} a_j \delta_{-\lambda_j},\, \lambda_j \in \Lambda_T,\,   a_j \in \mathbb{R},\, j\in \mathbb{N}.
% \end{equation}
 We define the (bilateral) Laplace transform in the complex plane $\mathbb{C}$ for $ \mu \in M_+(\mathbb{R})$ as
\begin{equation}
\label{laplace_transform_radon_measure}
\widehat{\mu}(s)=\int_{-\infty}^{+\infty} d\mu(t)e^{-st}, \quad s \in \mathbb{C},
\end{equation}
provided that % when 
the %previous 
integral 
%against the measure $\mu$ 
exists. We have $\widehat{\mu * \rho}(s)=\widehat{\mu }(s) \widehat{ \rho}(s)$, for all $\mu , \rho \in M_+(\mathbb{R})$ and $s \in \mathbb{C}$. For all $\lambda \in \mathbb{R}$, $e^{s \lambda}$ is the Laplace transform of the element $\delta_{-\lambda}$ in $s \in \mathbb{C}$. For an element $\mu \in M(\mathbb{R}_-) $, the Laplace transform reads:
\begin{equation}
    \widehat{\mu}(s)=\int_{-\infty}^{0} d\mu(t)e^{-st}, \quad s \in \mathbb{C},
\end{equation}
where the previous integral is understood as a Lebesgue integral on $(-\infty,0]$.

\subsection{The truncation operator}

We now define the truncation to positive times of a measurable function $f$ defined on $\mathbb{R}$ as the following mapping $\pi$ satisfying the equation
\begin{equation*}
  \left(\pi f \right)(t) = \begin{cases}
   f(t),\mbox{ if $t \ge 0$},\\
   0, \mbox{ if $t< 0$}.
 \end{cases}
\end{equation*}
%  \begin{equation}
%  \begin{aligned}
%    \pi:\,  \quad C_0(\mathbb{R}) &\rightarrow   C_0(\mathbb{R}_+) \\
%  f &\mapsto \pi f,
%  \end{aligned}
%  \end{equation}
% where $\left(\pi f \right)(t)= f(t)$ for $t\ge 0$. 
Let us introduce the space $C_{0,+}(\mathbb{R})$ the space of continuous functions with support bounded on the left. The following properties of the truncation operator can be easily proved.
% following the presentation of \cite{YamamotoRealization,yamamoto1989reachability}, with the only detail that we specify the Yamamoto's exposure in the framework of Radon measure instead of general distributions. Let $C_{0,c}(\mathbb{R}_+)$ be the space of continuous functions on $\mathbb{R}$ with compact support contained in $\mathbb{R}_+$ and $M(\mathbb{R}_+)$ is its topological dual space by the Riesz representation theorem. For $\alpha \in M_+(\mathbb{R})$ and $\psi \in C_{0,c}(\mathbb{R}_+)$, $\scalprod{\alpha,\psi}$ denotes the duality product. For $\alpha \in M_{+}(\mathbb{R})$, we define its truncation $\pi\alpha \in  M(\mathbb R_+)$ by
% \begin{equation}
% \label{eq:pi}
% \scalprod{\pi \alpha, \psi} = \scalprod{\alpha, \psi},\qquad \psi \in C_{0,c}(\mathbb{R}_+).
% \end{equation}
% We have that $\pi \alpha$ is a well-defined element of $M(\mathbb{R}_+)$. Furthermore, $\pi$ is continuous with respect to the strong dual topology of $M_{+}(\mathbb{R})$ and $M(\mathbb{R}_+)$. Note that $\pi$ truncates Radon measures to positive times only and, in particular, $\pi\delta_0 = 0$. 

% Let us start by summarizing Lemmas~ A.2 and A.3 from %that we can find in 
% \cite{YamamotoRealization} in the following statement.

\begin{lemma}
\label{lem:sumlemmasYamamoto}
The following assertions hold true:
\begin{enumerate}[i)]
\item  \label{lem:A_2_Yamamoto}
For $ \alpha \in C_{0,+}(\mathbb{R})$, we have $\pi(\alpha)=0$ if and only if $\supp(\alpha) \subset (-\infty,0]$.
\item \label{lem:A_3_Yamamoto}
$\pi(\alpha*\beta)=\pi(\alpha* \pi \beta) $ for every $\alpha \in M(\mathbb{R}_{-})$ and $\beta \in C_{0,+}(\mathbb{R})$.
\end{enumerate}
\end{lemma}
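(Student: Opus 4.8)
The plan is to prove the two assertions of Lemma~\ref{lem:sumlemmasYamamoto} separately, both essentially by unwinding definitions; neither requires much machinery, so the ``main obstacle'' is mostly a matter of carefully handling supports and the definition of convolution for a compactly supported measure against a continuous function of left-bounded support.

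For assertion~\eqref{lem:A_2_Yamamoto}, I would argue by the very definition of the truncation operator $\pi$. If $\supp(\alpha)\subset(-\infty,0]$, then $\alpha(t)=0$ for all $t>0$; moreover, since $\alpha$ is continuous and vanishes on the open set $(0,+\infty)$, it also vanishes at $t=0$ by continuity, so $(\pi\alpha)(t)=\alpha(t)=0$ for $t\ge 0$ and $(\pi\alpha)(t)=0$ for $t<0$, i.e. $\pi(\alpha)=0$. Conversely, if $\pi(\alpha)=0$, then $\alpha(t)=0$ for every $t\ge0$; hence $\{t\in\mathbb{R}\mid \alpha(t)\neq 0\}\subset(-\infty,0)$, and taking closures gives $\supp(\alpha)\subset(-\infty,0]$. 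The one point to be slightly careful about is the behaviour at $t=0$: it is handled in one direction by continuity of $\alpha$ and in the other by the convention that $\mathbb{R}_-=(-\infty,0]$ is closed, so the closure of a subset of $(-\infty,0)$ still lies in $(-\infty,0]$.

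For assertion~\eqref{lem:A_3_Yamamoto}, the key observation is that $\beta-\pi\beta$ is a continuous function (here one uses that $\alpha$, hence the relevant convolution, interacts well with $C_{0,+}(\mathbb{R})$; more precisely $\beta\in C_{0,+}(\mathbb{R})$ and $\pi\beta$ also has left-bounded support, so $\beta-\pi\beta\in C_{0,+}(\mathbb{R})$) whose support is contained in $(-\infty,0]$, indeed in $(-\infty,0)$ up to the endpoint. Since $\alpha\in M(\mathbb{R}_-)$ has support in $\mathbb{R}_-$, the convolution $\alpha*(\beta-\pi\beta)$ again has support contained in $\mathbb{R}_-$: the support of a convolution is contained in the (closure of the) sum of the supports, and $\mathbb{R}_-+\mathbb{R}_-\subset\mathbb{R}_-$. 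Then, by assertion~\eqref{lem:A_2_Yamamoto} applied to $\alpha*(\beta-\pi\beta)\in C_{0,+}(\mathbb{R})$, we get $\pi\bigl(\alpha*(\beta-\pi\beta)\bigr)=0$. By bilinearity of the convolution and linearity of $\pi$, this rearranges to $\pi(\alpha*\beta)=\pi(\alpha*\pi\beta)$, which is the claim.

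The step I expect to require the most care is justifying that $\alpha*\gamma\in C_{0,+}(\mathbb{R})$ and that $\supp(\alpha*\gamma)\subset\supp(\alpha)+\supp(\gamma)$ for $\alpha\in M(\mathbb{R}_-)$ and $\gamma\in C_{0,+}(\mathbb{R})$: one needs that the convolution of a compactly supported Radon measure with a continuous function of left-bounded support is well defined, continuous, and still has left-bounded support, together with the support inclusion. These facts are standard but should be invoked cleanly (e.g. from the distributional framework recalled in Section~\ref{sec_dis}), and once they are in hand the proof of~\eqref{lem:A_3_Yamamoto} is the short computation above. I would present~\eqref{lem:A_2_Yamamoto} first since it is used in the proof of~\eqref{lem:A_3_Yamamoto}.
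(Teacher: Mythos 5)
The paper itself offers no proof of this lemma (it is stated as ``easily proved''), so your argument can only be judged on its own terms. Part~i) is fine. In part~ii), however, there is a genuine flaw in the reduction: the function $\beta-\pi\beta$ is in general \emph{not} continuous, hence not an element of $C_{0,+}(\mathbb{R})$. Indeed $(\beta-\pi\beta)(t)=\beta(t)$ for $t<0$ and $=0$ for $t\ge0$, so it has a jump at $t=0$ whenever $\beta(0)\neq0$; consequently $\alpha*(\beta-\pi\beta)$ need not be continuous either (take $\alpha=\delta_0$), and assertion~i) — which is stated only for continuous functions — cannot be applied to it. The continuity is not a cosmetic point: for a discontinuous function, the inclusion $\supp\bigl(\alpha*(\beta-\pi\beta)\bigr)\subset(-\infty,0]$ obtained from the support-sum rule only yields vanishing on the open half-line $(0,+\infty)$ and says nothing about the value at $t=0$, whereas $\pi(\cdot)=0$ requires vanishing on the closed half-line $[0,+\infty)$. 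The passage from ``support in $(-\infty,0]$'' to ``$\pi(\cdot)=0$'' in part~i) is exactly where continuity is consumed, so you cannot reuse it here.

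The repair is short and you should make it explicit: work directly with the set where $\beta-\pi\beta$ vanishes rather than with its support. By definition of $\pi$, the function $\gamma:=\beta-\pi\beta$ satisfies $\gamma(u)=0$ for every $u\ge0$ (including $u=0$). For $t\ge0$ one has $(\alpha*\gamma)(t)=\int_{-\infty}^{0}\gamma(t-\tau)\,d\alpha(\tau)$, and for every $\tau\in\supp(\alpha)\subset(-\infty,0]$ the argument satisfies $t-\tau\ge t\ge0$, so the integrand vanishes identically and $(\alpha*\gamma)(t)=0$ for all $t\ge0$, i.e. $\pi(\alpha*\gamma)=0$. Linearity of $\pi$ and bilinearity of $*$ then give $\pi(\alpha*\beta)=\pi(\alpha*\pi\beta)$ as you intended. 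With this substitution (and keeping your justification that $\alpha*\beta$ and $\alpha*\pi\beta$ are well defined, with the difference of the two convolutions handled pointwise as above), the proof is complete.
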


\section{Topological properties of the quotient algebra $M(\mathbb{R}_-)/(p)$}

The aim of this section is to study the topological structure of the quotient algebra $M(\mathbb{R}_-)/(p)$ where $(p)$ is the principal ideal generated by any $p \in \Omega^T_- \subset M(\mathbb{R}_-)$, for some $T>0$, with the assumption that $p\neq 0$ and the support of $p$ is not reduced to the singleton $\{0\}$. In another words, $p$ is given by
\begin{equation}
\label{eq:omega}
    p=\sum_{j=0}^{N} p_j \delta_{-\lambda_j},\, \lambda_j\in [0,T],\,   p_j \in \mathbb{R},\,N \in \mathbb{N},
\end{equation}
and there exists $j \in \{0,...,N\}$ such that $p_j\lambda_j\neq 0$. 

We next recall the framework developed by Y. Yamamoto in \cite{yamamoto1989reachability}. Consider the bilinear form $( \cdot, \cdot )$ on $M(\mathbb{R}_-) \times C_0(\mathbb{R}_+)$ defined by
$$ ( w, \gamma ) :=(w*\pi \gamma) (0)=\int_{- \infty}^{0} dw(\tau) \gamma(-\tau),\quad w \in M(\mathbb{R}_-),\quad \gamma \in C_0(\mathbb{R}_+).$$

The space of  Radon measures $M(\mathbb{R}_-)$ is a normed algebra with the total variation norm
$$\|w\|_{\rm TV}:= \underset{\substack{\|\gamma\|_{\infty} \le 1, \\   \gamma \in C_0(\mathbb{R}_+)}}         
        {\sup} \abs*{( w,\gamma )},\quad w \in M(\mathbb{R}_-),$$
where $\|\gamma\|_{\infty}:= \underset{t \in \mathbb{R_+}}{\sup}|\gamma(t)|$ for $\gamma \in C_0(\mathbb{R}_+)$. We define
$$X^p:= \left\{ \gamma \in C_0(\mathbb{R}_+), \, \pi(p* \pi\gamma)=0 \right\}. $$
%where $\pi$ is the truncation to positive times of the function $p*x$. 
and we introduce the orthogonal complement of $X^p$
$$\left(X^p \right)^{\perp}:= \left\{w \in M(\mathbb{R}_-),\, ( w,\gamma )=0,\, \forall \gamma \in X^p \right\}.$$
One can see from the definition of the orthogonal complement that $\left(X^p \right)^{\perp}$ is a closed subspace of $M(\mathbb{R}_-)$. 
%(or we can invoke the more evolved argument of the dual pairing, see \cite[Theorem 8.9.2, (d)]{beckenstein2011topological}). 
Thus we can define the normed quotient space $M(\mathbb{R}_-)/\left(X^p \right)^{\perp}$, see for instance \cite[Proposition 3.1, (ii), Chap. 3]{swartz2009elementary}, endowed with the norm
\begin{equation}
    \label{eq:norm_quotient}\|[w]\| :=\underset{\gamma \in \left(X^p \right)^{\perp}}{ \mathrm{inf}}\|w+\gamma \|_{\rm TV},\quad [w] \in M(\mathbb{R}_-)/\left(X^p \right)^{\perp},
    \end{equation}
where $\left[ w \right] \in M(\mathbb{R}_-)/\left(X^p \right)^{\perp}$ denotes any class of equivalence of  $M(\mathbb{R}_-)/\left(X^p \right)^{\perp}$.
 We denote by $(p):=\left\{p* \psi|\,\psi \in M(\mathbb{R}_-)\right\}$ the two-sided ideal generated by $p$ over the commutative algebra $M(\mathbb{R}_-)$. It turns out that the orthogonal complement of $X^p$ is in fact $(p)$ and we give a proof of that similar in the spirit of \cite[Lemma 2.18]{yamamoto1989reachability}.

\begin{lemma}
\label{lemm1}
    The following equation holds,
$$\left(X^p \right)^{\perp}=(p) .$$
\end{lemma}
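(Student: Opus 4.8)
The plan is to prove the two inclusions $(p) \subseteq (X^p)^\perp$ and $(X^p)^\perp \subseteq (p)$ separately, the first being routine and the second being the real content.

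For the easy inclusion $(p)\subseteq (X^p)^\perp$, take an arbitrary element $p*\psi$ with $\psi\in M(\mathbb{R}_-)$ and an arbitrary $\gamma\in X^p$. I would compute the pairing $(p*\psi,\gamma)=(p*\psi*\pi\gamma)(0)$. Using commutativity of convolution and Lemma~\ref{lem:sumlemmasYamamoto}\ref{lem:A_3_Yamamoto} (with $\psi\in M(\mathbb{R}_-)$ playing the role of $\alpha$ and $\gamma$ the role of $\beta$, or rather iterating the truncation identity), one rewrites this so that the factor $\pi(p*\pi\gamma)=0$ appears; concretely $\pi(p*\psi*\pi\gamma)=\pi(\psi*\pi(p*\pi\gamma))=\pi(\psi*0)=0$, and evaluating at $0$ gives $(p*\psi,\gamma)=0$. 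Hence $p*\psi\in(X^p)^\perp$.

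For the hard inclusion $(X^p)^\perp\subseteq (p)$, the natural strategy is a Hahn--Banach/duality argument combined with an explicit analysis of the structure of $X^p$. Since $(p)$ is a closed subspace of $M(\mathbb{R}_-)$ (this should be checked: $p$ has an invertible leading Dirac term up to scaling, so convolution by $p$ is bounded below on an appropriate sense, or one argues directly), if some $w\in(X^p)^\perp$ were not in $(p)$, by Hahn--Banach there would be a continuous linear functional on $M(\mathbb{R}_-)$ vanishing on $(p)$ but not at $w$. The point is then to identify the annihilator of $(p)$ inside the relevant predual: one wants to show that every such functional is represented by pairing against some $\gamma\in X^p$, which would force $(w,\gamma)\neq 0$, contradicting $w\in(X^p)^\perp$. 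Equivalently, and more in the spirit of Yamamoto's \cite[Lemma 2.18]{yamamoto1989reachability}, one shows directly that $X^p$ is exactly the pre-annihilator of $(p)$: a function $\gamma\in C_0(\mathbb{R}_+)$ pairs to zero against all of $(p)$ if and only if $\pi(p*\pi\gamma)=0$. Unwinding, $(p*\psi,\gamma)=0$ for all $\psi\in M(\mathbb{R}_-)$ says $(\psi*\pi(p*\pi\gamma))(0)=0$ for all $\psi\in M(\mathbb{R}_-)$, and testing against $\psi=\delta_{-t}$ for all $t\ge0$ recovers $\pi(p*\pi\gamma)(t)=0$ for all $t\ge 0$, i.e. $\gamma\in X^p$. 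Then the bipolar theorem (in the duality between $M(\mathbb{R}_-)$ and $C_0(\mathbb{R}_+)$, using that $(p)$ is weak-$*$ closed) yields $(X^p)^\perp=((p)^{\perp_{\mathrm{pre}}})^\perp=\overline{(p)}^{\,w*}=(p)$.

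The main obstacle I anticipate is the closedness (and weak-$*$ closedness) of the ideal $(p)$ in $M(\mathbb{R}_-)$, which is what makes the bipolar argument close the loop and give equality rather than merely $(X^p)^\perp\supseteq\overline{(p)}$. This is where the hypothesis that $p$ is a nonzero finitely-supported measure whose support is not the singleton $\{0\}$ should enter: one needs to know that convolution by $p$ has closed range, presumably by exploiting the extremal Dirac mass in the support of $p$ (the one with largest delay $\lambda_j$) to get a quantitative lower bound, or by an Ascoli-type compactness argument on the supports. A secondary technical point is making the duality bracket and the total-variation norm interact correctly — in particular justifying that $M(\mathbb{R}_-)$ with $\|\cdot\|_{\mathrm{TV}}$ is genuinely the dual (or a suitable subspace of the dual) of a space on which $C_0(\mathbb{R}_+)$ acts, so that Hahn--Banach and the bipolar theorem apply; I would handle this by restricting attention to measures supported in a fixed compact interval $[-T',0]$ and citing the standard Riesz representation duality there, then taking unions.
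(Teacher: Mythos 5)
Your first inclusion $(p)\subseteq (X^p)^\perp$ is correct and is exactly the paper's argument. The problem is the converse inclusion. Your plan reduces it, via the bipolar theorem in the dual pair $\langle M(\mathbb{R}_-),C_0(\mathbb{R}_+)\rangle$, to two facts: (a) $X^p$ is the pre-annihilator of $(p)$, which you verify correctly by testing against $\psi=\delta_{-t}$; and (b) $(p)$ is closed for the weak topology $\sigma(M(\mathbb{R}_-),C_0(\mathbb{R}_+))$. You flag (b) as ``the main obstacle'' and leave it unproved, offering only vague suggestions (a lower bound from the extremal Dirac mass, an Ascoli argument). That is a genuine gap, and not a minor one: a subspace being an orthogonal complement (hence weakly closed) is essentially the \emph{conclusion} of the lemma, so establishing (b) is roughly as hard as the statement you are trying to prove. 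Note also that norm-closedness of the range of convolution by $p$ would not suffice here — subspaces of a dual can be norm-closed without being weak-$*$ closed — and $M(\mathbb{R}_-)$ is only an inductive limit of the dual spaces $M([-T',0])$, so even a Krein--\v{S}mulian-type argument would need care.

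The paper avoids all of this by an explicit construction. Because the extremal Dirac mass $p_{j_0}\delta_{-\lambda_{j_0}}$ (with $\lambda_{j_0}$ the largest delay actually present) can be factored out, a locally finite Neumann series shows that $p$ is invertible in the larger algebra $M_+(\mathbb{R})$ of measures with support bounded on the left. Given $w\in (X^p)^\perp$ and any test function $\phi\in\mathcal{D}(\mathbb{R}_-)$, Lemma~\ref{lem:sumlemmasYamamoto}\ref{lem:A_3_Yamamoto} shows that $\gamma:=\pi(p^{-1}*\phi)$ lies in $X^p$, whence $\pi(w*p^{-1}*\phi)(0)=( w,\gamma)=0$; replacing $\phi$ by $\delta_{-t}*\phi$ gives $\pi(w*p^{-1}*\phi)(t)=0$ for all $t\ge 0$, so $w*p^{-1}*\phi$ is supported in $(-\infty,0]$ for every such $\phi$, i.e.\ $\psi:=w*p^{-1}\in M(\mathbb{R}_-)$ and $w=p*\psi\in(p)$. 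This yields the hard inclusion directly, with no closedness of $(p)$ assumed anywhere. If you want to salvage your duality route, you would in effect have to reproduce this invertibility argument to prove (b), so you should adopt it directly.
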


\begin{proof}

Pick $p*\psi \in (p)$ with $\psi \in M(\mathbb{R}_-)$. For all $\gamma \in X^p$, we have
\begin{equation}
    ( p*\psi, \gamma )=(\psi*p*\pi \gamma)(0)=0,
\end{equation}
because $\gamma \in X^p$ implies that  $p*\pi \gamma(t)=0$ for $t \ge 0
$. Thus $(p) \subseteq\left(X^p \right)^{\perp} $.
Conversely, let $w \in \left(X^p \right)^{\perp} $. Take any $\phi \in  \mathcal{D}(\mathbb{R}_-)$, the space of smooth functions defined on $\mathbb{R}$ with compact support included in $\mathbb{R}_-$. A Neumann series argument proves that $p$ is invertible in $M_+(\mathbb{R})$ with respect to the convolution, and we denote $p^{-1} \in M_+(\mathbb{R}) $ its inverse. From Item~\ref{lem:A_3_Yamamoto} in Lemma~\ref{lem:sumlemmasYamamoto}, we have that the function $t\in \mathbb{R}_+ \mapsto \gamma(t):=\pi (p^{-1}* \phi)(t) $ belongs to $X^p$ and
\begin{equation}
\label{eq:mystere}
\begin{aligned}
    \pi(w*p^{-1}*\phi)(0)=\pi(w*\pi(p^{-1}*\phi))(0)
    =( w, \gamma)
    =0,
    \end{aligned}
\end{equation}
because $w \in \left(X^p \right)^{\perp} $. If we take, for all $t \in \mathbb{R}_+$, $\delta_{-t}*\phi$ instead of $\phi$ in Equation~\eqref{eq:mystere}, we get that 
\begin{equation}
\label{eq:lemm1}
\begin{aligned}
   \pi(w*p^{-1}*\phi)(t) &= \pi(w*p^{-1}*\delta_{-t}*\phi)(0)
   =0.
   \end{aligned}
\end{equation}
 From \eqref{eq:lemm1}, we have that $\pi(w*p^{-1}*\phi)$ is zero so that Item~\ref{lem:A_2_Yamamoto} in Lemma~\ref{lem:sumlemmasYamamoto} implies that the support of $w*p^{-1}*\phi \in C_{0,+}(\mathbb{R})$ is included in $(-\infty,0]$. Since it holds for any $\phi \in \mathcal{D}(\mathbb{R}_-)$, we have that $w * p^{-1}$ lies in $ M(\mathbb{R}_-)$. In particular, there exists $\psi \in M(\mathbb{R}_-)$ such that $w=p*\psi$. We deduce that $ \left(X^p \right)^{\perp}\subseteq (p) $, achieving the proof of the lemma.

\end{proof}

Thanks to Lemma~\ref{lemm1}, we have that the quotient normed space $M(\mathbb{R}_-)/\left(X^p \right)^{\perp}$ is in fact the normed quotient algebra equal to $M(\mathbb{R}_-)/(p)$ with unit $[\delta_0]$, see for instance \cite[Sect 1.4, Lemma 1.4.4]{kaniuth2009course} for a reference on normed quotient algebras. In particular, we have that $[w_1*w_2]=[w_1]*[w_2]$ and $[w_1+w_2]=[w_1]+[w_2]$ for all $w_1,w_2 \in M(\mathbb{R}_-)/(p)$. Our next step is to derive the following properties for the quotient algebra $M(\mathbb{R}_{-})/(p)$, which are a specification of \cite[Lemma 2.21]{yamamoto1989reachability} in the framework of our article.

\begin{theorem}\label{th3}
 The quotient algebra $M(\mathbb{R}_{-})/(p)$ is a commutative unital Banach algebra with $[\delta_0]$ as unit. Furthermore, we have
  \begin{equation}
    \label{def_norm}
          \|[w] \|=\underset{\substack{\|\gamma\|_{[0,T]} \le 1, \\  \gamma \in X^p}}  
         {\sup} \abs*{( w,\gamma )},\quad [w] \in M(\mathbb{R}_-)/(p).
              \end{equation}
\end{theorem}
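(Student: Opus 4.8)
The plan is to prove Theorem~\ref{th3} in two parts. First, I would establish that $M(\mathbb{R}_-)/(p)$ is a commutative unital Banach algebra. Commutativity and the fact that $[\delta_0]$ is the unit follow immediately from the corresponding properties of $M(\mathbb{R}_-)$ together with the quotient algebra structure already noted after Lemma~\ref{lemm1}. The point that needs work is \emph{completeness}: a normed quotient algebra $A/I$ is a Banach algebra as soon as $A$ is a Banach algebra and $I$ is a \emph{closed} two-sided ideal. We already observed that $(X^p)^\perp$ is closed as an orthogonal complement, and by Lemma~\ref{lemm1} it equals $(p)$; and $M(\mathbb{R}_-)$ with the total variation norm is a Banach algebra. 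So completeness follows from the standard fact (cf.~\cite[Sect 1.4, Lemma 1.4.4]{kaniuth2009course}) that the quotient of a Banach algebra by a closed ideal is again a Banach algebra. I would state this explicitly rather than leave it implicit.

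The substantive part is the norm formula~\eqref{def_norm}, i.e.\ showing
\[
\underset{\gamma \in (X^p)^{\perp}}{\inf}\|w+\gamma\|_{\rm TV}
=\underset{\substack{\|\gamma\|_{[0,T]} \le 1, \ \gamma \in X^p}}{\sup} \abs{( w,\gamma )}.
\]
This is exactly a duality/Hahn--Banach statement: the quotient norm on $M(\mathbb{R}_-)/(X^p)^\perp$ should coincide with the norm that $[w]$ induces as a functional on a suitable predual, here $X^p$ equipped with the sup-norm on $[0,T]$. My approach would be as follows. The inequality ``$\geq$'' is the easy direction: for any $\gamma \in X^p$ with $\|\gamma\|_{[0,T]}\le 1$ and any $\chi \in (X^p)^\perp = (p)$, we have $(w+\chi,\gamma) = (w,\gamma)$ since $(\chi,\gamma)=0$, and $\abs{(w+\chi,\gamma)} \le \|w+\chi\|_{\rm TV}\,\|\gamma\|_\infty$. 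The only subtlety is passing from the sup-norm on all of $\mathbb{R}_+$ (which appears in the definition of $\|\cdot\|_{\rm TV}$) to the sup-norm on $[0,T]$: one must check that for $w \in M(\mathbb{R}_-)$ and $\gamma \in X^p$, the pairing $(w,\gamma)$ depends only on $\gamma|_{[0,T']}$ for $T'$ large enough relative to the support of $w$ — but actually a cleaner route is to note $(w,\gamma) = (p^{-1} * p * w, \gamma) $ is controlled via the structure of $X^p$; I would in fact reduce the relevant values of $\gamma$ to their restriction to $[0,T]$ using that $p \in \Omega_-^T$ and the defining relation $\pi(p*\pi\gamma)=0$, which lets one ``fold'' $\gamma$ periodically and bound $\|\gamma\|_\infty$ by $\|\gamma\|_{[0,T]}$ up to the relevant pairing. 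Then take the infimum over $\chi$ on the left.

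For the reverse inequality ``$\leq$'', the plan is Hahn--Banach. Consider the linear functional on the subspace $X^p \subset C_0(\mathbb{R}_+)$ (or better, on $X^p$ with the $\|\cdot\|_{[0,T]}$ seminorm, after checking this is actually a norm on the relevant quotient) given by $\gamma \mapsto (w,\gamma)$; its norm is the right-hand side of~\eqref{def_norm}. Extend it by Hahn--Banach to a bounded functional on the ambient space with the same norm; represent that functional (via Riesz representation for $C_0$, or by the very definition of $\|\cdot\|_{\rm TV}$ as a dual norm) by some $w' \in M(\mathbb{R}_-)$ with $\|w'\|_{\rm TV}$ equal to the right-hand side and $(w',\gamma)=(w,\gamma)$ for all $\gamma \in X^p$. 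The latter says $w - w' \in (X^p)^\perp = (p)$, hence $[w]=[w']$ and $\|[w]\| \le \|w'\|_{\rm TV} = $ RHS. The main obstacle I anticipate is the bookkeeping around the norm $\|\cdot\|_{[0,T]}$ versus $\|\cdot\|_\infty$: one has to argue carefully that restricting test functions $\gamma \in X^p$ to $[0,T]$ loses no information in the pairing $(w,\gamma)$ and that the relevant extension/representation can be carried out compatibly — this is where invertibility of $p$ in $M_+(\mathbb{R})$ (Neumann series, already used in Lemma~\ref{lemm1}) and Item~\ref{lem:A_3_Yamamoto} of Lemma~\ref{lem:sumlemmasYamamoto} re-enter to identify $X^p$ concretely as $\{\pi(p^{-1}*\phi): \phi \in C_{0,+}(\mathbb{R})$ supported in $\mathbb{R}_-\}$-type elements, periodic-like modulo $[0,T]$. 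Modulo that, the argument is the standard ``quotient norm = dual norm on the annihilator's predual'' duality.
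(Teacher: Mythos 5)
Your treatment of the norm formula \eqref{def_norm} is essentially the paper's argument: the easy inequality from $(\chi,\gamma)=0$ for $\chi\in(X^p)^\perp=(p)$, then Hahn--Banach extension of $\gamma\mapsto(w,\gamma)$ from $X^p$ to $C_0(\mathbb{R}_+)$ followed by Riesz representation, producing a representative $w'$ of $[w]$ whose total variation norm equals the right-hand side. The subtlety you flag about $\|\cdot\|_{[0,T]}$ versus $\|\cdot\|_\infty$ is resolved in the paper exactly along the lines you anticipate, but more cleanly than by ``folding'': since $p\neq 0$ has a support point other than $0$, membership in $X^p$ is equivalent to the difference delay equation $\sum_{j=0}^N p_j\gamma(t+\lambda_j)=0$ for $t\ge 0$, so any $\gamma\in X^p$ is determined by $\gamma|_{[0,T]}$ and $\|\cdot\|_{[0,T]}$ induces on $X^p$ the compact-open topology; moreover the Hahn--Banach extension is performed against the seminorm $\max_{t\in[0,T]}|x(t)|$, which forces the Riesz measure to be supported in $[-T,0]$, so only $\gamma|_{[0,T]}$ ever enters the pairing.

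Your argument for completeness, however, has a genuine gap. The standard fact you invoke --- the quotient of a Banach algebra by a closed two-sided ideal is a Banach algebra --- requires $M(\mathbb{R}_-)$ to be complete for $\|\cdot\|_{\rm TV}$, and it is not: the paper is careful to call it only a \emph{normed} algebra. Compact support is not preserved under total-variation limits; for instance the partial sums of $\sum_{k\ge 1}2^{-k}\delta_{-k}$ form a Cauchy sequence in $M(\mathbb{R}_-)$ whose TV-limit has unbounded support and hence lies outside $M(\mathbb{R}_-)$. So completeness of the quotient cannot be inherited from the ambient algebra. The paper obtains it instead as a by-product of the very duality you set up for \eqref{def_norm}: the map $[w]\mapsto\bigl(\gamma\mapsto(w,\gamma)\bigr)$ is shown to be an isometric isomorphism of $M(\mathbb{R}_-)/(X^p)^\perp$ onto the dual space $(X^p)'$, and the dual of a normed space is automatically complete. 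If you reorganize your proof so that the Hahn--Banach/Riesz step is read as surjectivity of this map (rather than merely as the inequality ``$\le$''), completeness comes for free and the gap closes.
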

\begin{proof}
We already know that $M(\mathbb{R}_{-})/(p)$ is a commutative unital algebra with unit $[\delta_0]$. It remains to prove that it is a Banach algebra. We have that $X^p \subset C_0(\mathbb{R}_+)$ with the topology induced by the uniform convergence on compact sets. By the definition of $p \neq 0$, we have that $\gamma \in X^p$ if and only if $\gamma \in C_0(\mathbb{R}_+)$ and it satisfies the difference delay equation
\begin{equation}
\label{diff_delay_equation}
     \sum_{j=0}^{N} p_j\gamma(t+\lambda_j)=0,\quad t \ge 0,
\end{equation}
where $p_j\lambda_j\neq 0$ for some $j \in \{0,...,N\}$. Thanks to Equation~\eqref{diff_delay_equation}, we have that the values on $\mathbb{R}$ of the function $\gamma$ are entirely constrained by the value of $\gamma$ on the interval $[0,T]$. Thus, the topology on $X^p$ is equivalent to the topology induced by the uniform convergence on the interval $[0,T]$. Therefore $X^p$ is a Banach space endowed with the norm $\|\phi \|_{[0,T]}= \underset{t \in [0,T]}{\sup} \abs{\phi(t)} $ with $\phi \in X^p$. We denote by $(X^p)'$ the topological dual of $X^p$, i.e. the space of continuous linear forms on $X^p$ with respect to the topology induced by the norm $\|\cdot\|_{[0,T]}$. We have that the space $(X^p)'$ is a Banach space endowed with the norm $$\|x\|_{\left(X^p\right)'}:=\underset{\substack{\|\phi\|_{[0,T]} \le 1, \\  \phi \in X^p}}         
        {\sup} \abs*{\langle x,\phi \rangle_{X^p}},\quad x \in \left(X^p\right)',$$
        where $\langle \cdot, \cdot \rangle_{X^p}$ denotes the duality product on $X^p$.
% We will prove that $M(\mathbb{R}_{-})/(p)$ is isometrically isomorphic to $(X^p)'$ so that it will prove that $M(\mathbb{R}_{-})/(p)$ is a Banach algebra. From Lemma~2, we will in fact build an isometric isomorphism between $M(\mathbb{R}_{-})/\left(X^p\right)^{\perp}$ and $(X^p)'$ to achieve the proof of our theorem.
We define the linear map
 \begin{equation}
 \begin{aligned}
   h:\,  \quad M(\mathbb{R}_-)/\left(X^p \right)^{\perp} 
   \rightarrow   (X^p)' \\
 [w] \mapsto \left(\phi \in X^p \mapsto ( w,\phi )\right).
 \end{aligned}
 \end{equation}

We claim that the linear map $h$ is well-defined and is an isometric isomorphism between  $M(\mathbb{R}_-)/\left(X^p \right)^{\perp}$ and $(X^p)'$, which is the conclusion of our theorem because, thanks to Lemma~\ref{lemm1}, we have $\left(X^p \right)^{\perp}=(p)$. 

 For every $[w] \in M(\mathbb{R}_-)/\left(X^p \right)^{\perp} $, we have that $y  \in [w]$ if and only if $y=w+\psi$ with $\psi \in \left(X^p \right)^{\perp} $. Thus by definition of the orthogonal complement, the map $h$ is well defined because it does not depend on the choice of the represent $ w\in M(\mathbb{R}_-)$.

 The linear map $h$ is injective: reasoning by contradiction, there exists  $[w] \neq 0 \in M(\mathbb{R}_-)/\left(X^p \right)^{\perp}$ such that $h([w])=0$, i.e.,  $w \in \left(X^p \right)^{\perp}  $, which is a contradiction.

 We finally show now that the map $h$ is onto and it is an isometry. An element  $f \in (X^p)'$ is a continuous linear functional for the topology induced by the convergence on compact sets. By the Hahn-Banach extension theorem,
 %\textcolor{blue}{Un peu plus de détail pour toi Yacine, on a bien que $C_0(\mathbb{R}_+)$ est un espace topologique, et il est de plus Fréchet avec la topologie induite avec la convergence uniforme sur les compacts. On applique alors Hahn-Banach théorème pour les seminormes, où la seminorme $p$ ici est donné par $p(x)=\|f \|_{\left(X^p \right)'}  \underset{\substack{\|x\|_{[0,T]} \le 1, \\  x \in C_0(\mathbb{R}_+)}}{\sup} \abs{x(t)}$ with $x \in C_0(\mathbb{R}_+)$ pour $f \in \left(X^p \right)'$ fixé.  } 
 we can extend $f$ on a continuous linear functional $\tilde{f}$ belonging to $(C_0(\mathbb{R}_+))'$, the dual space of $C_0(\mathbb{R}_+)$ with the duality product $\langle \cdot, \cdot \rangle_{C_0(\mathbb{R}_+)}$, such that
 % \begin{equation}
 % \underset{\substack{\|x\|_{[0,T]} \le 1, \\  x \in C_0(\mathbb{R}_+)}}         
 %        {\sup} |\langle \tilde{f},x \rangle_{C_0(\mathbb{R}_+)}|=\|f \|_{\left(X^p \right)'}.
 %        \end{equation}
  % \begin{equation}  
  % \label{eq:jesaispas}
  %       \abs*{\langle \tilde{f},x \rangle_{C_0(\mathbb{R}_+)}} \le \|f \|_{\left(X^p \right)'}  \underset{\substack{\|x\|_{[0,T]} \le 1, \\  x \in C_0(\mathbb{R}_+)}}{\sup} \abs*{x(t)}  .
  %       \end{equation}
    \begin{equation}  
  \label{eq:jesaispas}
        \abs*{\langle \tilde{f},x \rangle_{C_0(\mathbb{R}_+)}} \le \|f \|_{\left(X^p \right)'}  \underset{t \in [0,T]}{\max} \abs*{x(t)},\quad  x \in C_0(\mathbb{R}_+).
        \end{equation}
        By the Riesz representation theorem, 
        %\textcolor{blue}{Equation~\eqref{eq:jesaispas} prouve que la forme linéaire $\tilde{f}$ est à support dans $[0,T]$ donc normalement ça doit marcher ce que je dis} 
        there exists $\psi \in M(\mathbb{R}_-)$, with compact support included in $[-T,0]$ such that $\langle \tilde{f},x \rangle_{C_0(\mathbb{R}_+)}=(\psi,x)$ for all $x \in C_0(\mathbb{R}_+)$ and $\|\psi \|_{\rm TV}=\|f\|_{\left(X_p\right)'}$. Furthermore, for all $\phi \in \left(X^p \right)^{\perp}$, we have 
 \begin{equation}
 \begin{aligned}
     \|\psi+\phi \|_{\rm TV} =\underset{\substack{\|x\|_{\infty} \le 1, \\  x \in C_0(\mathbb{R}_+)}}         
        {\sup} \abs*{\left( \psi+\phi,x \right)}
        \ge \underset{\substack{\|x\|_{\infty} \le 1, \\  x \in X^p}}         
        {\sup} \abs*{\left( \psi+\phi,x \right)}
        = \underset{\substack{\|x\|_{[0,T]} \le 1, \\  x \in X^p}}         
        {\sup} \abs*{\left( \psi,x \right)}
        =\|\psi \|_{\rm TV} .
        \end{aligned}      
 \end{equation}
 Thus we have $\|[\psi]\|=\|\psi\|_{\rm TV}$. We deduce that $h([\psi])=f$ and $\| h([\psi]) \|_{\left(X^p\right)'}=\|f\|_{\left(X^p\right)'}=\| [\psi] \|$.
 
To sum up, we proved that the map $h$ is an isometric isomorphism between $M(\mathbb{R}_-)/\left(X^p \right)^{\perp}$ and  $(X^p)'$, achieving the proof of our theorem.

\end{proof}

\section{A corona theorem for a subalgebra of Radon measures negatively and compactly supported}

For the Banach algebra $M(\mathbb{R}_-)/(p)$, we call \emph{homomorphism} a continuous linear mapping $\phi:\,M(\mathbb{R}_-)/(p) \rightarrow \mathbb{C}$  satisfying
%the equations $\phi(f+g)=\phi(f)+\phi(g)$ 
$\phi(FG)=\phi(F)\phi(G)$ for all $F,\,G \in M(\mathbb{R}_-)/(p)$. Recall that a character $\chi$ is  application from $\mathbb{R}_+$ to $\mathbb{C}$ such that $|\chi(t)|=1$ and $\chi(t+\tau)=\chi(t)\chi(\tau)$ for all $t,\, \tau \in \mathbb{R}_+$. We first give in Proposition~\ref{prop:class_homo} a description of the nonzero homomorphisms on $M(\mathbb{R}_-)/(p)$.

\begin{proposition}
\label{prop:class_homo}
    If $\phi \neq 0$ is a homomorphism in $M(\mathbb{R}_-)/(p)$ then either:
\begin{enumerate}[(1)]
  \item \label{item1} for every $h \in \Omega_{-}^{\mathrm{bd}}$ given by \eqref{eq:omega}:
    \begin{equation}
        \phi([h])=\begin{cases}
            h_j,\, \mbox{ if there is $\lambda_j =0$ and $h_j \neq 0$},\\
            0,\, \mbox{otherwise.}
        \end{cases}
    \end{equation}

\item \label{item2} or there exist $\sigma \in \mathbb{R}$ and a character $\chi$ such that, for every $h\in \Omega_{-}^{\mathrm{bd}}$ given by \eqref{eq:omega},
\begin{equation}\label{eq:item2}
    \phi([h])=\sum_{j=0}^{N} h_j e^{\sigma \lambda_j} \chi \left(\lambda_j\right),
\end{equation}
%\item \label{item3}There exists $s \in \mathbb{C}_-$ such that $\phi(h)=h(s)$ for all $h \in \mathrm{APW}_{-}$.
    \end{enumerate}
\end{proposition}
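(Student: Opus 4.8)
My proof would proceed in three short steps, the whole matter reducing to the analysis of the single map $\Phi\colon\mathbb{R}_+\to\mathbb{C}$ defined by $\Phi(t):=\phi([\delta_{-t}])$. Since $\phi$ is linear and $[h]=\sum_{j=0}^{N}h_j[\delta_{-\lambda_j}]$ for $h$ of the form \eqref{eq:omega}, one has $\phi([h])=\sum_{j=0}^{N}h_j\Phi(\lambda_j)$, so it is enough to identify $\Phi$. I would first record three elementary facts. Because $[\delta_0]$ is the unit of the Banach algebra $M(\mathbb{R}_-)/(p)$ (Theorem~\ref{th3}) and $\phi\neq0$, the relation $\phi([\delta_0])=\phi([\delta_0])^2$ forces $\Phi(0)=\phi([\delta_0])=1$. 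From $\delta_{-t}*\delta_{-s}=\delta_{-(t+s)}$ and multiplicativity of $\phi$ on the quotient algebra, $\Phi(t+s)=\Phi(t)\Phi(s)$ for all $t,s\ge0$; in particular $\Phi(t)=\Phi(t/2)^2$. Finally, since $\phi$ is continuous, iterating $|\phi(F)|^n=|\phi(F^n)|\le\|\phi\|\,\|F^n\|\le\|\phi\|\,\|F\|^n$ and letting $n\to\infty$ gives $|\phi(F)|\le\|F\|$ for every $F$; taking $F=[\delta_{-t}]$ and using $\|[\delta_{-t}]\|\le\|\delta_{-t}\|_{\mathrm{TV}}=1$ yields $|\Phi(t)|\le1$ for all $t\ge0$.

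Next I would establish a dichotomy: either $\Phi\equiv0$ on $(0,\infty)$, or $\Phi$ never vanishes there. Let $Z:=\{t>0:\Phi(t)=0\}$. If $t\in Z$ then $\Phi(t/2)^2=\Phi(t)=0$, hence $t/2\in Z$, and inductively $t/2^k\in Z$ for all $k\in\mathbb{N}^*$; and if $t\in Z$ and $s>0$ then $\Phi(t+s)=\Phi(t)\Phi(s)=0$, so $t+s\in Z$. Hence, if $Z\neq\emptyset$, fix $t_0\in Z$; for an arbitrary $u>0$ choose $k$ with $t_0/2^k<u$ and write $u=t_0/2^k+(u-t_0/2^k)$ to conclude $u\in Z$. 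Thus $Z=(0,\infty)$, so $\Phi(t)=0$ for $t>0$ while $\Phi(0)=1$; substituting into $\phi([h])=\sum_j h_j\Phi(\lambda_j)$ gives $\phi([h])=h_j$ when some (necessarily unique) $\lambda_j=0$ and $\phi([h])=0$ otherwise, which is exactly the first alternative in the statement.

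It remains to treat the case $Z=\emptyset$, so $\Phi$ takes values in $\mathbb{C}^{*}$. Then $a(t):=\log|\Phi(t)|$ is well defined, solves Cauchy's functional equation $a(t+s)=a(t)+a(s)$, and is bounded above (by $0$) on the interval $[0,1]$; by the classical fact that an additive function bounded above on an interval is linear, $a(t)=\sigma t$ with $\sigma:=\log|\Phi(1)|\le0$, so $|\Phi(t)|=e^{\sigma t}$ and $\sigma\in\mathbb{R}$. Setting $\chi(t):=e^{-\sigma t}\Phi(t)$ we get $|\chi(t)|=1$, $\chi(0)=1$ and $\chi(t+s)=\chi(t)\chi(s)$, i.e. $\chi$ is a character, and $\Phi(t)=e^{\sigma t}\chi(t)$. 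Plugging this into $\phi([h])=\sum_{j=0}^{N}h_j\Phi(\lambda_j)$ yields precisely \eqref{eq:item2}.

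I do not expect a genuine obstacle once Theorem~\ref{th3} is available: the argument uses only multiplicativity, the normalization $\phi([\delta_0])=1$, and the contraction bound $|\phi(F)|\le\|F\|$. The two points that deserve care are (i) the estimate $\|[\delta_{-t}]\|\le1$, which follows from the quotient norm being dominated by the total variation norm together with $\|\delta_{-t}\|_{\mathrm{TV}}=\sup\{|\gamma(t)|:\gamma\in C_0(\mathbb{R}_+),\ \|\gamma\|_\infty\le1\}=1$; and (ii) invoking the correct rigidity statement for Cauchy's equation — additive and bounded above on an interval forces linearity — with no measurability hypothesis, which is consistent with the second alternative genuinely permitting discontinuous (non-measurable) characters $\chi$.
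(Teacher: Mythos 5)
Your proposal is correct and follows essentially the same route as the paper: both reduce the problem to the one-parameter multiplicative map $t\mapsto\phi([\delta_{-t}])$, use continuity of $\phi$ to bound it, and then invoke the rigidity/dichotomy for the exponential Cauchy equation to land in either the trivial case (Item~(1)) or the case $e^{\sigma t}\chi(t)$ (Item~(2)). The only differences are cosmetic: you prove the zero-set dichotomy and the ``additive bounded above implies linear'' step inline (and obtain the global bound $|\Phi(t)|\le 1$ via the spectral-radius contraction argument), where the paper cites Kannappan and only needs boundedness of $L$ on $[0,T]$.
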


\begin{proof}
Let $\phi$ be a nonzero homomorphism $\phi \neq 0$ on $M(\mathbb{R}_-)/(p)$. In particular, by the continuity property, there exists $C>0$ (in fact $C$ can be taken equal to one because we are in a unital Banach algebra) such that:

\begin{equation}
\label{eq:seminorm_lemma}
    |\phi([h])| \le C  \|[h]\|,\quad \forall [h] \in M(\mathbb{R}_-)/(p).
\end{equation}

% Thanks to \cite[Theorem 4.1 and Equation~4.2]{arens1956generalized},
% there exists a nonzero application $\xi:G_+\to\mathbb{C}$ 
% %continuous with the topology induced by the norm $\|.\|_{\infty}$, 
% verifying $\abs*{\xi(x)}\le 1$ and $\xi(xy)=\xi(x)\xi(y)$ for all $x,\, y \in G_+$ and such that 
% \begin{equation}
% \label{eq:char0.0}
%     \phi(h)=\int_G h\, \xi d\mu=\sum_{k \in \mathbb{N}} h_k \xi(\mathcal{e}_{\lambda_k}),\quad h\hbox{ is given by \eqref{eq_alm_peri}}.
% \end{equation}
For $t\geq 0$, set $L(t)=|\phi([\delta_{-t}])|$, yielding a well-defined map from $\mathbb{R}_+$ to $\mathbb{R}_+$. 
We deduce from the equations~\eqref{def_norm} and \eqref{eq:seminorm_lemma} that $L$ is bounded over the interval $[0,T]$. Furthermore, from the property of homomorphisms, we deduce that $L$ is a multiplicative map, that is,
\begin{equation}
\label{eq_cauchy}
L(t_1+t_2)=L(t_1) L(t_2),\quad t_1,t_2 \in \mathbb{R}_+.
\end{equation}
Equation~\eqref{eq_cauchy} is a Cauchy equation of exponential type, see for instance \cite{kannappan2009functional}.
Since $\phi$ is a nonzero homomorphism, there exists $t_0 \geq 0$ such that $L(t_0)=c>0$ for some $t_0\in \mathbb{R}_+$. Thus we have $c=L(t_0)=L(t_0)L(0)=cL(0)$ and we deduce that $L(0)=1$. Following the discussion in \cite[Paragraph 1.5.1]{kannappan2009functional}, if there exists $t_*>0$ such that $L(t_*)=0$ then $L(t)=0$ for every $t>0$. In that case, $L$ is called the \emph{trivial solution} to the Cauchy equation of exponential type. Otherwise, the application of \cite[Theorem 1.37]{kannappan2009functional} gives the existence of $\sigma \in \mathbb{R}$ such that $L(t)=e^{\sigma t}$ for every $t\geq 0$. 
% Moreover, an immediate induction shows that for every positive integer $n$, one has $f(n)=C^n$ where $C=f(1)\in [0,1]$, and one deduces that for positive rational number $r$, $f(r)=C^r$. Note also that $f$ is non increasing: for every $0\leq t_1\leq t_2$, it holds $f(t_2)=f(t_1)f(t_2-t_1)\leq f(t_1)$ since $f(t_2-t_1)\in[0,1]$. Then for every positive real number $t$, pick an increasing sequence $(q_n)_{n\geq 0}$ and a decreasing one $(r_n)_{n\geq 0}$ of positive rational  numbers, both converging to $t$. Since $f$ is non increasing, it holds $C^{r_n}\leq f(t)\leq C^{q_n}$ and letting $n$ tends to infinity, one gets that $f(t)=C^t$. Hence $f$ is continuous on $\mathbb{R}^*_+$.
In summary, $L(0)=1$ and we have the following alternative:
\begin{enumerate}[(a)]
    \item either $L(t)=0$ for $t>0$ and then $\phi([\delta_{-t}])=0$ for $t>0$ and $\phi(\left[\delta_{0}\right])=1$, i.e., this corresponds to Item~\ref{item1} in the theorem with the help of Equation~\eqref{eq:omega};
    \item
    or there exists $\sigma \in \mathbb{R}$ such that $L(t)=e^{\sigma t}$ for $t\geq 0$, and then $\phi([\delta_{-t}])=e^{\sigma t} \chi(t)$ with $\chi(t)$ equal to $\phi([\delta_{-t}])e^{-\sigma t}$ which verifies $\abs{\chi(t)}=1$ for $t\geq 0$, i.e., $\chi$ is a character. According to Equation~\eqref{eq:omega}, one gets Item~\ref{item2} in the theorem.
    
\end{enumerate}

\end{proof}
We can now state and prove the corona theorem of this paper. 
\begin{theorem}
\label{thm_corona}
 Let  $K$ be a positive integer and $T$ be a strictly positive real number. Consider $f_i \in \Omega_{-}^{ T}$ for $i=1,\dots,K$. If there exists $\alpha>0$ such that
\begin{equation}
\label{eq:th1}
    \sum_{i=1}^{K} \abs*{\hat{f}_i(s)}\ge \alpha,\quad \forall s \in \mathbb{C},
\end{equation}
then there exist $g_i \in M(\mathbb{R}_{-})$ for $i=1,\dots,K$ satisfying
\begin{equation}
\label{eq:th2}
     \sum_{i=1}^K f_i* g_i=\delta_0.
\end{equation}
% Furthermore, for all $\widetilde{T}>T$ the family $(g_i)_{i=1,...,K}$ can be taken in the space $\mathrm{APW}_{-}^{\widetilde{T}}$.
\end{theorem}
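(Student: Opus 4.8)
The plan is to combine a standard reduction-to-a-quotient argument with the Gelfand theory for the commutative unital Banach algebra $M(\mathbb{R}_-)/(p)$, where $p$ is built out of the given data. First I would dispose of trivial cases: if some $f_i$ is (a nonzero multiple of) $\delta_0$, then \eqref{eq:th2} is immediate, so assume no $f_i$ has support reduced to $\{0\}$; also if all the $f_i$ were supported at $\{0\}$ only we are done, so we may pick $p$ to be one of the $f_i$ whose support is not the singleton $\{0\}$ — in fact it is cleaner to take $p = f_1$ after relabeling, assuming $f_1$ has a nontrivial delay, so that $p$ satisfies exactly the hypothesis of Section~3 and the quotient $A := M(\mathbb{R}_-)/(p)$ is a commutative unital Banach algebra by Theorem~\ref{th3}. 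The Bézout identity \eqref{eq:th2} is equivalent to: the images $[f_2],\dots,[f_K]$ generate the unit ideal in $A$ (indeed $\sum_{i=2}^K f_i * g_i \equiv \delta_0 \pmod{(p)}$ means $\delta_0 - \sum_{i\ge 2} f_i*g_i = p*g_1 = f_1 * g_1$). So the whole problem reduces to showing $[f_2],\dots,[f_K]$ lie in no common maximal ideal of $A$, i.e.\ no nonzero homomorphism $\phi$ on $A$ kills all of them.

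Next I would invoke the description of nonzero homomorphisms from Proposition~\ref{prop:class_homo}. Suppose for contradiction that some nonzero $\phi$ satisfies $\phi([f_i]) = 0$ for all $i = 2,\dots,K$; note also $\phi([f_1]) = \phi([p]) = 0$ automatically since $[p] = 0$ in $A$, hence in fact $\phi([f_i]) = 0$ for \emph{all} $i = 1,\dots,K$. If $\phi$ is of type \eqref{item1}, then $\phi([f_i])$ is the coefficient of $\delta_0$ in $f_i$ (or $0$), so the hypothesis forces every $f_i$ to have zero mass at $0$; but then, evaluating the Laplace transforms, $\hat f_i(s) = \sum_j (f_i)_j e^{s\lambda_j^{(i)}}$ with all $\lambda_j^{(i)} > 0$ tends to $0$ as $\Re(s) \to -\infty$, contradicting \eqref{eq:th1}. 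If instead $\phi$ is of type \eqref{item2}, there are $\sigma \in \mathbb{R}$ and a character $\chi$ with $\phi([f_i]) = \sum_j (f_i)_j e^{\sigma \lambda_j^{(i)}} \chi(\lambda_j^{(i)})$. The key observation is that a character $\chi$ of $(\mathbb{R}_+,+)$ — a priori only required to satisfy $|\chi(t)| = 1$ and $\chi(t+\tau) = \chi(t)\chi(\tau)$, with no continuity — must be recovered from finitely many delay values; concretely, for a fixed finite set of delays one can always realize $\chi$ on them by $\chi(\lambda) = e^{i\omega\lambda}$ for some real $\omega$. Here is where I expect the main obstacle: a multiplicative $|\cdot|=1$ map on $\mathbb{R}_+$ need \emph{not} be continuous (using a Hamel basis one builds pathological ones), so $\chi(\lambda)$ is not literally $e^{i\omega\lambda}$ for a single $\omega$ across all $\lambda$. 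However, the set of all delays appearing among $f_1,\dots,f_K$ is finite, say $\{\mu_1,\dots,\mu_M\}$; I would argue that since $\mathbb{R}$ is a $\mathbb{Q}$-vector space and $\chi$ restricted to the (finitely generated, hence finite-dimensional over $\mathbb{Q}$) subgroup generated by $\mu_1,\dots,\mu_M$ is a homomorphism to the unit circle, its values can be matched by $e^{i\omega\mu_k}$ for a suitable $\omega$ — or, more robustly, I can bypass this by directly setting $s_0 := \sigma + i\tau$ chosen so that $e^{-s_0(-\lambda)} = e^{s_0\lambda}$ reproduces $e^{\sigma\lambda}\chi(\lambda)$ on the finite delay set. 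In either formulation, $\phi([f_i]) = \hat f_i(-s_0)$ up to the sign convention in \eqref{laplace_transform_radon_measure} (recall $\widehat{\delta_{-\lambda}}(s) = e^{s\lambda}$), so $\phi([f_i]) = 0$ for all $i$ gives $\sum_i |\hat f_i(s_0')| = 0$ at the corresponding point $s_0' \in \mathbb{C}$, contradicting \eqref{eq:th1}.

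Having derived a contradiction in both cases, I conclude that $[f_2],\dots,[f_K]$ generate the unit ideal in the commutative unital Banach algebra $A = M(\mathbb{R}_-)/(p)$; by the standard Gelfand-theoretic fact that a finite family of elements of a commutative unital Banach algebra generates the unit ideal if and only if they lie in no common maximal ideal (equivalently, are not simultaneously annihilated by any character), there exist $[g_2],\dots,[g_K] \in A$ with $\sum_{i=2}^K [f_i]*[g_i] = [\delta_0]$. Lifting to $M(\mathbb{R}_-)$ and using $(p) = f_1 * M(\mathbb{R}_-)$, there is $g_1 \in M(\mathbb{R}_-)$ with $\delta_0 - \sum_{i=2}^K f_i * g_i = f_1 * g_1$, which is exactly \eqref{eq:th2}. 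The only subtlety I would double-check carefully is the translation between the sign in the Laplace transform \eqref{laplace_transform_radon_measure} and the exponent $e^{\sigma\lambda_j}\chi(\lambda_j)$ appearing in Proposition~\ref{prop:class_homo}, so that the point where \eqref{eq:th1} is violated is correctly identified; and the handling of non-continuous characters, which I would resolve by noting only finitely many delays occur and passing through the finitely-generated subgroup they span.
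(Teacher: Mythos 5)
Your overall architecture coincides with the paper's: reduce modulo the principal ideal generated by one $f_i$ with a genuine delay, apply Theorem~\ref{th3} and Gelfand theory, argue by contradiction through a nonzero homomorphism, and split according to the two cases of Proposition~\ref{prop:class_homo}. The first case is handled exactly as in the paper. The gap is in the second case, and it is precisely at the point you flag as the "main obstacle." Your claim that the (possibly discontinuous) character $\chi$ can be \emph{exactly} realized on the finite delay set by $\chi(\mu_k)=e^{i\omega\mu_k}$ for a single real $\omega$ is false in general. Write the delays as integer combinations of a rationally independent family $r_1,\dots,r_q$; then the values $\bigl(\omega r_1/(2\pi),\dots,\omega r_q/(2\pi)\bigr)$ modulo $1$ sweep out only a dense one-parameter winding of the torus $\mathbb{T}^q$, not all of it. Concretely, take $r_1=1$, $r_2=\sqrt{2}$ and a character with $\chi(1)=1$, $\chi(\sqrt{2})=-1$ (such a character exists: define it on the free abelian group $\mathbb{Z}+\mathbb{Z}\sqrt{2}$ and extend to $\mathbb{R}_+$ using divisibility of the circle group). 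Exact matching would force $2k\sqrt{2}=2m+1$ for integers $k,m$, which is impossible. So there is no point $s_0'\in\mathbb{C}$ at which $\sum_i\lvert\hat f_i(s_0')\rvert=0$, and your stated contradiction does not materialize.

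The repair — which is what the paper actually does — is to settle for \emph{approximate} realization: by Kronecker's approximation theorem, for every $\epsilon>0$ there is $\beta\in\mathbb{R}$ with $\lvert\chi(\lambda_{k,l})-e^{2\pi i\beta\lambda_{k,l}}\rvert\le C\epsilon$ simultaneously for all delays, whence $\lvert\hat f_k(\sigma+2\pi i\beta)\rvert\le C\widetilde C\epsilon$ for all $k$. This produces a sequence $(s_n)$ along which $\sum_i\lvert\hat f_i(s_n)\rvert\to 0$, which still contradicts \eqref{eq:th1} because that hypothesis is a \emph{uniform} lower bound over all of $\mathbb{C}$. Your proof structure survives this correction essentially unchanged, but as written the decisive step asserts something stronger than what is true, and the weaker (approximate) statement is exactly what is needed and what requires the Kronecker argument.
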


\begin{remark} 
    Condition~\eqref{eq:th1} is the same as that the condition of the corona theorem for $H^{\infty}$ by Carleson \cite{Carleson1962}. However, our corona theorem is much simpler because we worked in the algebra $M(\mathbb{R}_-)$ and we stated an interpolation result just for the elements belonging to $\Omega_{-}^{ T}$, for some $T>0$. More precisely, the properties of the homomorphisms given in Proposition~\ref{prop:class_homo} are harder to obtain for the algebra $H^{\infty}$. Furthermore, contrary to the corona theorem in $H^{\infty}$, we did not provide an estimate on the Laplace transform of the $g_i$ depending on $K$ and $\alpha$.
\end{remark}

\begin{proof}
Notice first that if $K=1$ then the conclusion holds trivially (since in that case $f_1=h_1\delta_{-\lambda_1}$ with $h_1\neq 0$) and we will assume then that $K\geq 2$ in the sequel. Moreover, one deduces from 
Condition~\eqref{eq:th1} that either every $f_i$ is  zero or a nonzero multiple of $\delta_0$ (and the result is again immediate or at least one of the $f_i$'s (let say $f_{K}$) has a nonempty support with a non zero element in its support. We will assume the latter in the sequel.

The first step of the proof consists in reducing the corona problem as stated in $M(\mathbb{R}_-)$ into a corona problem in the commutative unital Banach quotient algebra $A=M(\mathbb{R}_{-})/\left(f_{K}\right)$, where 
$\left(f_{K}\right)$, the two-sided ideal generated by $f_{K}$ over the commutative normed algebra
$\mathbb{R}_{-}$, is defined as
$\left\{f_{K}*h|\, h \in M(\mathbb{R}_-) \right\}$.
We note by $\left[\cdot\right]$ a class of equivalence of the quotient algebra $A$. Hence, we can interpret Equation~\eqref{eq:th2} as
\begin{equation}
\label{eq_red_quo_algebra}
\sum_{i=1}^{K-1} [f_i]* [g_i]=[\delta_0].
\end{equation}
Proving the theorem amounts to prove the existence of $[g_i] \in A $, $i=1,...,K-1$ satisfying \eqref{eq_red_quo_algebra}. Thanks to Theorem~\ref{th3}, $A$ is a commutative unital Banach algebra, and so we can use the Gelfand theory \cite[Chap VII, §8]{conway2019course}. Equation~\eqref{eq_red_quo_algebra} is equivalent to the fact that $[\delta_0]$ belongs to the two-sided ideal $\left([f_1],\cdots,[f_{K-1}]\right)$
generated by $[f_1],\cdots,[f_{K-1}]$ over the commutative algebra $A$ and defined as 
$\left\{[f_1]*[h_1]+\dotsc+[f_{K-1}]*[h_{K-1}]|\, [h_1],\dotsc,[h_{K-1}] \in A \right\}$. In other words,  Equation~\eqref{eq_red_quo_algebra} is equivalent to the fact that $\left([f_1],\cdots,[f_{K-1}]\right)$ is equal to $A$. 

Reasoning by contradiction, let us assume that 
$\left([f_1],\cdots,[f_{K-1}]\right)$ is not equal to $A$ and hence it is a proper ideal of $A$ which is, according to Krull's theorem (see for instance \cite[Theorem 11.3]{rudin1991functional}), included into a maximal ideal of $A$. In particular, $[f_1],...,[f_{K}]$ belong to a maximal ideal of $A$. 
The Gelfand representation theory states that the
maximal ideals are in bijection with the nonzero complex homomorphisms of $A$ so that 
%each non-zero complex homomorphism can be seen as an ideal maximal and 
a maximal ideal is included into the kernel of a unique nonzero homomorphism, see for instance \cite[Proposition 8.2, Chap. VII]{conway2019course}. Hence, 
there exists a nonzero homomorphism $\phi$ of $A$
%we have that $f_1,...,f_K$ must belong to the kernel of a non-zero homomorphism of $\mathrm{APW}_-$, noted $\phi$, 
for which 
\begin{equation}
\label{eq:proof_cor1before}
    \phi([f_1])=\phi([f_2])=\cdots=\phi([f_K])=0.
\end{equation}

If $\phi$ is given by Item~\ref{item1} of Proposition~\ref{prop:class_homo}, then 
the limit of the left-hand side of \eqref{eq:th1} tends to zero as $\Re(s)$ tends to 
$-\infty$, which contradicts Equation~\eqref{eq:th1}.
Assume now that $\phi$ is given Item~\ref{item2} of Proposition~\ref{prop:class_homo}. %To undertake this case, we use the application of  Kronecker's approximation theorem. Let us note:
For every $k\in \llbracket 1,K\rrbracket$, the function $\hat{f}_k$ can be written as  
\begin{equation}\label{eq:proof_cor2}
   \hat{f}_k(s)= \sum_{l=0}^{n_k}f_{k,l} e^{s \lambda_{k,l}},\quad s \in \mathbb{C},
\end{equation}
where $n_k$ is an integer, $f_{k,l}$ a real number and $\lambda_{k,l} \in [0,T]$. We deduce from \eqref{eq:proof_cor1before}, \eqref{eq:proof_cor2} and \eqref{eq:item2} in Item~\ref{item2} of Proposition~\ref{prop:class_homo} that there exist $\sigma \in \mathbb{R}$ and a character $\chi$ such that 
\begin{equation}
\label{eq:proof_cor3}
    \sum_{l=0}^{n_k}f_{k,l}e^{\sigma \lambda_{k,l}} \chi( \lambda_{k,l})=0,\quad k\in \llbracket 1,K\rrbracket.
\end{equation}

We remark that, thanks to \cite[Proposition~3.9]{Chitour2016Stability},  there exist a positive integer $q$,  a rationally independent family $(r_1,\dots,r_q)$ of %$K$ 
positive real numbers, and nonnegative integers $m_{k,l,j}$  for $l\in \llbracket 1,n_k\rrbracket$, $k\in \llbracket 1,K\rrbracket$ and $j\in \llbracket 1,q\rrbracket$ such that
\begin{equation}
\label{eq_diophantienne_real}
\lambda_{k,l} = \sum_{j=1}^q m_{k,l,j} r_j.
\end{equation}

Since $|\chi(t)|=1$ for all $t \in \mathbb{R}$, we have $\chi(r_j)=e^{2 \pi i \gamma_j}$ for some $\gamma_j \in \mathbb{R}$ and for $j=1,...,q$. It follows that

\begin{equation}
\label{eq:proof_cor4}
    \chi(\lambda_{k,l})=e^{2\pi i \sum\limits_{j=1}^q m_{k,l,j} \gamma_j},\quad l\in \llbracket 1,n_k\rrbracket, \quad k\in \llbracket 1,K\rrbracket.
\end{equation}

By the Kronecker approximation theorem (see e.g. \cite[Theorem 2, Chapter 2]{hlawka2012geometric}), for every $\epsilon>0$, there exist a real number $\beta$ and integers $p_1,...,p_q$ such that
\begin{equation}
\label{eq:proof_cor6}
   \abs*{\beta r_j-\gamma_j-p_j} \le \epsilon, \quad \mbox{for } j=1,...,q.
\end{equation}
From Equations~\eqref{eq_diophantienne_real}-\eqref{eq:proof_cor4}, we obtain for all $k=1,...,K$ and $l=1,...,n_k$
\begin{equation}
\label{eq:proof_cor6.1}
    \begin{aligned}
       \abs*{\chi(\lambda_{k,\,l})- e^{2 \pi i \beta \lambda_{k,\,l}}} &=\abs*{e^{2\pi i \sum\limits_{j=1}^q m_{k,l,j} \gamma_j}- e^{2 \pi i \beta  \sum\limits_{j=1}^q m_{k,l,j} r_j}}=
       \abs*{1-e^{2\pi i \sum\limits_{j=1}^qm_{k,l,j}(\gamma_j+p_j- \beta r_j)}}
    \end{aligned}  
\end{equation}
%Thanks to Equation~\eqref{eq:proof_cor6}, replacing $\beta r_j$ by $\gamma_j+p_j+ \alpha$ in Equation~\eqref{eq:proof_cor6.1} yields
%\begin{equation}
%\label{eq:proof_cor6.1}
   % \begin{aligned}
    %   \abs*{ \chi(\lambda_{k,\,l})- e^{2 \pi i \beta \lambda_{k,\,l}}} &=\abs*%{e^{2\pi i \sum\limits_{j=1}^q m_{k,l,j} \gamma_j} \left(1- e^{ \alpha_j 2 \pi i   %\sum\limits_{j=1}^q m_{k,l,j}} \right)}.
  %  \end{aligned}  
%\end{equation}
Using \eqref{eq:proof_cor6} in the above equation, one gets that there exists $C>0$
%Equations~\eqref{eq_diophantienne_real}-\eqref{eq:proof_cor6} and and the Taylor expansion of $e^{x}$ for $x \in \mathbb{R}$ prove the existence of a constant $C>0$ 
such that, for all $k=1,...,K$ and $l=1,...,n_k$, we have:
\begin{equation}
\label{eq:proof_cor6bis}
    \begin{aligned}
       \abs*{\chi(\lambda_{k,\,l})- e^{2 \pi i \beta \lambda_{k,\,l}}} \le C \epsilon .
    \end{aligned}  
\end{equation}

Let us define:
\begin{equation}
\label{eq:proof_cor5}
s_{\epsilon}=\sigma+i \beta \in \mathbb{C} \quad \mathrm{and} \quad \widetilde{C}=\underset{k\in \llbracket 1,K\rrbracket}{\sup} \sum_{l=0}^{n_k} \abs*{f_{k,l}}  .
\end{equation}
Hence, from equations~\eqref{eq:proof_cor3}-\eqref{eq:proof_cor6bis}-\eqref{eq:proof_cor5}, we get for all $k=1,...,K$:
\begin{equation}
\label{eq:proof_cor7}
\begin{aligned}
   \abs*{f_k(s_{\epsilon})}&=\abs*{f_k(s_{\epsilon})-\sum_{l=0}^{n_k}f_{k,l}e^{\sigma \lambda_{k,l}} \chi( \lambda_{k,l})},\\
    &=\abs*{\sum_{l=0}^{n_k}\left(f_{k,l}e^{\sigma \lambda_{k,l}} \chi( \lambda_{k,l})-f_{k,l}e^{\sigma \lambda_{k,l}} e^{2 \pi i \beta \lambda_{k,l}}  \right)},\\
&\le   \sum_{l=0}^{n_k} \abs*{f_{k,l}} \abs*{\chi( \lambda_{k,l})-e^{2 \pi i \beta \lambda_{k,l}}} ,\\
       &\le C \widetilde{C} \epsilon .
    \end{aligned}
\end{equation}
Letting $\epsilon$ tend to zero and using \eqref{eq:proof_cor7}, we build a sequence of complex numbers $\left(s_n\right)_{n\in \mathbb{N}}$ such that
\begin{equation}
\label{eq:proof_cor8}
    \lim\limits_{n \to +\infty}\hat{f}_1(s_n)=\cdots=\lim\limits_{n \to +\infty} \hat{f}_K(s_n)=0,
\end{equation}
which contradicts Equation~\eqref{eq:th1}. That completes the proof of Theorem~\ref{thm_corona}.

\end{proof}

\begin{remark}
Two questions remain open. Is it possible to find $g_1,...,g_K \in M(\mathbb{R}_-)$ (resp. $\Omega_-^{\rm bd}$), in the case where $f_1,...,f_K \in M(\mathbb{R}_-)$ (resp. $\Omega_-^{\rm bd}$), satisfying \eqref{eq:th2} if \eqref{eq:th1} holds?
Corona questions for measures %are complicated and 
can fail to have positive answers hold true as proved by the Wiener--Pitt phenomenon, see for instance \cite{nikolski1999search}. For $M(\mathbb{R}_-)$, the characterization of the nonzero homomorphisms of $M(\mathbb{R}_-)$ does not seem to be stated in the literature and therefore a corona theorem for this algebra is an open question.

% Secondly, is it possible to find $g_1,...,g_K \in \Omega_-^{\rm bd}$. In the case of Theorem~\ref{thm_corona}, the characterization of the homomorphisms of the algebra $\mathrm{APW}_{-}$, which is a strict subalgebra of $M(\mathbb{R}_-)$
% %is well-known by the result of Arens and Singer \cite{arens1956generalized} and it is stated in Proposition~\ref{prop:class_homo}. Hence, it 
% allows us to carry out the proof of Theorem~\ref{thm_corona}. 

\end{remark}

%\begin{remark}
%We want to apply Theorem~\ref{thm_corona} to undertake the study of the exact controllability of linear difference delay control systems. Such a controllability problem requires to work with almost periodic function with a bounded Bohr--Fourier spectrum, see \cite{chitour:hal-03827918}. If we had wanted to obtain the $g_i$'s belonging to $\mathrm{APW}_-$ for $i=1,\dots,K$ satisfying Equation~\eqref{eq:th2}, the corona theorem would have been an easy consequence of the paper \cite{arens1956generalized}. We emphasize that the novel part of our proof lies on the Step 2 which is crucial to derive a corona theorem applicable for exact controllability problems of delay systems.

%\end{remark}

As application, we use Theorem~\ref{thm_corona} to establish a $L^1$-exact controllability of linear controlled delayed difference equations.

% \begin{corollary}
% \label{corollary1}
%  Let $T>0$, $k \in \mathbb{N}$ and $q_j \in  \Omega_{-}^{\Lambda_T}$ for $j=1,\dots,k$. Assume that there exists a positive real $c$ such that
% \begin{equation}
% \label{eq:lemm_gen_Y_corona1}
%     \sum_{j=1}^k \abs*{\widehat{q}_j(s)} \ge c>0, \quad \forall s \in \mathbb{C}.
% \end{equation}
%  Then there exists $p_j \in  \Omega^{\mathrm{bd}}_-$ for $j=1,\dots,k$ satisfying the equation
% \begin{equation}
% \label{eq:lemm_gen_Y_corona2}
%     \sum_{j=1}^k \widehat{q}_j(s)\widehat{p}_j(s) =1, \quad s \in \mathbb{C}.
% \end{equation}
% \end{corollary}

% We will apply Theorem~\ref{corollary1} to obtain an exact controllability criterion for the exact controllability of a class of delay systems.

\section{$L^1$ exact controllability of linear difference delay control systems}
\label{sec:con}
The motivation to prove Theorem~\ref{thm_corona} arises from the study of the exact controllability problem of LCDDE. More precisely, let us consider a linear difference delay control system of the form
\begin{equation}
\label{system_lin_formel2}
 x(t)=\sum_{j=1}^NA_jx(t-\Lambda_j)+Bu(t) , \qquad t \ge 0,
\end{equation}
where, $d$ and $m$ are two integers, the state $x$ and the control $u$ belong 
to $\mathbb{R}^d$ and $\mathbb{R}^m$ respectively, and $A_1,\dotsc,A_N$ and $B$ are constant matrices with real entries of appropriate size. Without loss of generality, the delays $\Lambda_1, \dotsc, \Lambda_N$ are positive real numbers so that $\Lambda_1< \dotsb <\Lambda_N$. 

Since an LCDDE defines a infinite-dimensional dynamical system, we must introduce the functional spaces defining the state space and the control space of System~\eqref{system_lin_formel2}. If $I$ is a bounded interval of $\mathbb{R}$ and $n \in \mathbb{N}^*$, we note $L^1(I,\mathbb{R}^n)$ the space of integrable functions on $I$ with values in $\mathbb{R}^n$.

For every $\tilde{t} \geq 0$, $u \in L^1\left([0,\tilde{t}],\mathbb{R}^m\right)$, and $x_0 \in L^1\left([-\Lambda_N,0],\mathbb{R}^d\right)$, there exists a unique solution $x \in L^1\left([-\Lambda_N,\tilde{t}],\mathbb{R}^d\right)$ such that $x(\theta)=x_0(\theta)$ for almost all $\theta \in [-\Lambda_N,0]$ and $x(\cdot)$ satisfies Equation~\eqref{system_lin_formel2} for almost 
all $t \in [0,\tilde{t}]$, cf. \cite[Proposition 2.2]{Chitour2020Approximate}. 

We aim at reaching elements of $L^1\left([-\Lambda_N,0],\mathbb{R}^d\right)$ with an integrable control in a finite time along trajectories of \eqref{system_lin_formel2}. For that purpose, we introduce the following definition of exact controllability.

\begin{definition}\label{def:Lq-cont} System~\eqref{system_lin_formel2} is \emph{$L^1$ exactly controllable in time $T>0$} if 
%we have the existence of $T$ such that 
for every  %$\phi \in L^2([-\Lambda_N,0],\mathbb{R}^d)$ and 
$x_0,\phi \in L^1\left([-\Lambda_N,0],\mathbb{R}^d\right)$, there exists $u \in L^1\left([0,T],\mathbb{R}^m\right)$ such that 
the solution $x(\cdot)$ of System~\eqref{system_lin_formel2} starting at $x_0$ and associated with the control $u$ verifies
\begin{equation}
x(T+\theta)= \phi(\theta),\quad \mbox{\rm{for almost all}  $\theta \in \left[-\Lambda_N,0\right]$}.
\end{equation}
\end{definition}

In \cite{chitour:hal-03827918}, it is proved that the $L^1$ exact controllability of System~\eqref{system_lin_formel2} is equivalent to the resolution of a Bézout identity over the algebra of Radon measures compactly supported in $\mathbb{R}_-$, see \cite[Theorem 5.13]{chitour:hal-03827918}. This characterization allows one to give a necessary condition for the $L^1$ exact controllability but the remaining question whether this condition is also sufficient or not was left open in that reference. 
Using Theorem~\ref{thm_corona}, we bring a positive answer to this question. Since the $L^1$-controllability criterion is expressed in the frequency domain, we introduce the  matrix-valued holomorphic map 
\begin{equation}\label{eq:defH}
H(s):=I_d- \sum_{j=1}^N e^{- s \Lambda_j} A_j, \qquad s \in \mathbb{C},
\end{equation}
 where $I_d$ is the identity operator on $\mathbb{R}^d$. The matrix $H(\cdot)$ relates the control frequency with the state space frequency. More precisely, assuming that $u \in L^1\left(\mathbb{R},\mathbb{R}^m\right)$ and $u(t)=x(t)=0$ for $t <0$, we take the one--sided Laplace transform in \eqref{system_lin_formel2} and we obtain that there exists $\alpha>0$ such that:
 \begin{equation}
 \label{eq:defH1}
     H(s)X(s)=B U(s),\quad s \in \mathbb{C},\quad \Re(s)>\alpha,
 \end{equation}
 with 
 \begin{equation}
 \label{eq:defH2}
X(s)=\int_{-\infty}^{+\infty}x(t) e^{-st}dt \quad \mbox{and}    \quad U(s)=\int_{-\infty}^{+\infty}u(t) e^{-st}dt.
  \end{equation}
The existence of $\alpha>0$ such that Equation~\eqref{eq:defH1} is satisfied follows from classical exponential estimates for difference delay equations, see \cite[Chapter 9]{Hale}. 
We note $\overline{H(\mathbb{C})}$ the closure of the holomorphic matrix $H(\cdot)$ in the complex plane $\mathbb{C}$. The $d \times (d+m)$ matrix $\left[M,B\right]$ denotes the concatenation of a $d \times d$ matrix $M$ and the matrix $B$. Furthermore, $\rank\left[M,B\right]$ denotes the dimension of the range of the matrix $\left[M,B\right]$.

We state a sufficient and necessary criterion for the $L^1$ exact controllability for \eqref{system_lin_formel2} in the frequency domain.

\begin{theorem}
\label{main_result2} System~\eqref{system_lin_formel2} is $L^1$ exactly controllable in time $d \Lambda_N$ if and only if the two following conditions hold:
\begin{enumerate}[i)]
\item \label{assumption1-L1}$\rank\left[M,B\right]=d$ for every  $M\in \overline{H(\mathbb{C})}$,
\item \label{assumption2-L1} $\rank[A_N,B]=d$.
\end{enumerate}
%\seb{Moreover, if Conditions~\ref{assumption1-L1} and \ref{assumption2-L1} hold true, then System~\eqref{system_lin_formel2} is \seb{$L^q$} exactly controllable in time $d \Lambda_N$ for all $q\in [1,+\infty)$}.

%where $\overline{I_d- \sum_{j=1}^N e^{- \cdot \Lambda_j} A_j}:=\left\{\tilde{Q} \in \mathcal{M}_{d,d}(\mathbb{C}), for \mbox{  $\vertiii{ I_d- \sum_{j=1}^N e^{- p_n \Lambda_j} A_j-\tilde{Q}} \underset{n\to+\infty}{\longrightarrow} 0$ and $(p_n)_{n \in \mathbb{N}} \in \mathbb{C}$} \right\}   $.

\end{theorem}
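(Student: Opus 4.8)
The starting point is the characterization of $L^1$ exact controllability from \cite[Theorem 5.13]{chitour:hal-03827918}: System~\eqref{system_lin_formel2} is $L^1$ exactly controllable in time $d\Lambda_N$ if and only if a certain Bézout identity holds in $M(\mathbb{R}_-)$, namely that the entries of the matrix-valued measure $\mathcal{H}$ (whose Laplace transform is $H(s)$, i.e. $\mathcal{H}=\delta_0 I_d - \sum_j A_j\delta_{-\Lambda_j}$) together with the columns of $B\delta_0$, arranged into an appropriate $d\times(d+m)$ matrix of measures, generate the unit ideal — more precisely that there exist Radon measures in $M(\mathbb{R}_-)$ solving a matrix Bézout equation. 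The plan is to translate that algebraic condition into the scalar corona hypothesis \eqref{eq:th1} of Theorem~\ref{thm_corona}. First I would recall/restate that the Bézout identity over $M(\mathbb{R}_-)$ for the rows of $[\mathcal{H},B\delta_0]$ is equivalent, after taking $d\times d$ minors, to a \emph{scalar} corona problem: let $f_1,\dots,f_K$ enumerate all the $d\times d$ minors of the $d\times(d+m)$ matrix $H(s)$ (these are finite $\mathbb{C}$-linear combinations of exponentials $e^{-s\mu}$ with $\mu\in[0,d\Lambda_N]$, hence Laplace transforms of elements of $\Omega_-^{d\Lambda_N}$, up to the standard passage from complex to real coefficients via real/imaginary parts); then solvability of the matrix Bézout identity is equivalent to $\sum_i|\hat f_i(s)|\geq\alpha$ for all $s\in\mathbb{C}$, and Theorem~\ref{thm_corona} supplies the scalar cofactors, from which Cramer's rule reconstructs the matrix solution. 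This reduces the theorem to proving: the two rank conditions \ref{assumption1-L1}--\ref{assumption2-L1} hold if and only if the $d\times d$ minors of $H$ have no common zero on $\mathbb{C}$ and are uniformly bounded away from zero.

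Second, I would prove the \emph{necessity} direction. Condition \ref{assumption1-L1} says exactly that for every $M\in\overline{H(\mathbb{C})}$ the $d\times(d+m)$ matrix $[M,B]$ has full row rank $d$, i.e. its $d\times d$ minors do not all vanish; since $\overline{H(\mathbb{C})}$ is the closure, this is precisely the statement that $\inf_{s\in\mathbb{C}}\sum_i|\hat f_i(s)|>0$ (a positive infimum on $H(\mathbb{C})$ extends to a positive minimum on the compact closure, using that the minors are continuous). Thus \ref{assumption1-L1} $\Leftrightarrow$ \eqref{eq:th1}. Condition \ref{assumption2-L1} is the extra constraint coming from the \emph{top-degree} (in $e^{-s\Lambda_N}$) behavior: it is the condition that the Bézout solution can be taken with supports in $\mathbb{R}_-$ rather than only the existence of a solution "at infinity", and in \cite{chitour:hal-03827918} it already appears as a necessary condition for $L^1$ exact controllability. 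I would cite that necessity and check that $\rank[A_N,B]=d$ is equivalent to the leading coefficient of the minor system being nonsingular in the appropriate sense (this is the condition that guarantees the cofactors produced by the corona theorem, a priori in $M(\mathbb{R}_-)$, actually yield a controller with the right causality/support so the Bézout identity of \cite[Theorem 5.13]{chitour:hal-03827918} is met in full, not merely its frequency-domain shadow).

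Third, the \emph{sufficiency} direction — the new content — goes: assume \ref{assumption1-L1} and \ref{assumption2-L1}. By the equivalence just described, \ref{assumption1-L1} gives \eqref{eq:th1} for the family $\{f_i\}$ of ($d\times d$)-minors; splitting each complex-coefficient exponential sum into its real and imaginary parts produces a finite family in $\Omega_-^{d\Lambda_N}$ still satisfying \eqref{eq:th1}, so Theorem~\ref{thm_corona} yields $g_i\in M(\mathbb{R}_-)$ with $\sum_i f_i*g_i=\delta_0$. Then Cramer/cofactor expansion converts this scalar identity into a matrix identity $[\mathcal{H},B\delta_0]*\mathcal{G}=\delta_0 I_d$ for some $(d+m)\times d$ matrix $\mathcal{G}$ of elements of $M(\mathbb{R}_-)$; condition \ref{assumption2-L1} is then invoked to upgrade this to the precise Bézout identity (with the correct support bound $d\Lambda_N$ and causality) characterized in \cite[Theorem 5.13]{chitour:hal-03827918}, which yields $L^1$ exact controllability in time $d\Lambda_N$. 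I expect the main obstacle to be exactly this last bookkeeping step: verifying that the abstract cofactors delivered by the corona theorem (which lives in the large algebra $M(\mathbb{R}_-)$ and carries no support/degree information) can be massaged — using \ref{assumption2-L1} and the structure of $H$, in particular that $A_N$ enters only through the single delay $\Lambda_N$ — into a solution meeting the exact form and time bound required by \cite{chitour:hal-03827918}, rather than merely an $L^1$-controllability statement in some unspecified larger time. Making the correspondence between "$d\times d$ minors of $H$ have no common zero" and the matrix Bézout solvability fully rigorous (the passage minors $\leftrightarrow$ ideal membership in a non-Noetherian measure algebra) is the other delicate point, and I would handle it by the standard adjugate-matrix identity $M\,\mathrm{adj}(M)=\det(M)I$ applied entrywise in $M(\mathbb{R}_-)$.
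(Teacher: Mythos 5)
Your top-level strategy coincides with the paper's: invoke the characterization of $L^1$ exact controllability from \cite[Theorem 5.13]{chitour:hal-03827918}, reduce to a corona problem, and apply Theorem~\ref{thm_corona}. But the paper's actual proof is nothing more than a two-sentence citation: the entire reduction (including the translation between the rank conditions and a Bézout identity, and the role of condition \ref{assumption2-L1}) was already carried out in \cite{chitour:hal-03827918}, where the sufficiency direction was reduced to an open conjecture (their Conjecture 5.18) whose resolution is exactly Theorem~\ref{thm_corona} of this paper; Remark 5.19 of that reference then closes the argument. Everything you spend your second and third steps reconstructing is outsourced by the paper to that reference, so your proposal is attempting to re-prove the cited machinery rather than following the paper's (admittedly minimal) proof.

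The reconstruction itself has a genuine flaw. You posit that \cite[Theorem 5.13]{chitour:hal-03827918} characterizes controllability by left-invertibility of the $d\times(d+m)$ matrix $[\mathcal{H},B\delta_0]$ over $M(\mathbb{R}_-)$, assert that condition \ref{assumption1-L1} alone is equivalent to the corona bound \eqref{eq:th1} for the $d\times d$ minors, and relegate condition \ref{assumption2-L1} to support/causality bookkeeping. Take $d=m=1$, $A_1=0$, $B=0$: then $\mathcal{H}=\delta_0$, the minors of $[\delta_0,0]$ are $\delta_0$ and $0$, the sum $\abs*{\hat f_1(s)}+\abs*{\hat f_2(s)}\equiv 1$ satisfies \eqref{eq:th1}, and the matrix Bézout identity is trivially solvable with perfectly supported cofactors --- yet the system $x(t)=0$ is not controllable, consistent with the failure of condition \ref{assumption2-L1}. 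So the Bézout identity you wrote down cannot be the one in \cite[Theorem 5.13]{chitour:hal-03827918}, and condition \ref{assumption2-L1} must enter the formulation of the controllability criterion itself, not merely the support estimates. A secondary error: $\overline{H(\mathbb{C})}$ is not compact ($H(s)$ is unbounded as $\Re(s)\to-\infty$), so your claim that a positive infimum of the minors ``extends to a positive minimum on the compact closure'' does not stand; controlling the regime $\Re(s)\to-\infty$ is precisely where $A_N$ and condition \ref{assumption2-L1} intervene. To repair the proof you would need the actual statement of \cite[Theorem 5.13]{chitour:hal-03827918} and of Conjecture 5.18 and Remark 5.19 there, which is what the paper relies on.
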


\begin{proof}
Theorem~\ref{thm_corona} solves \cite[Conjecture~$5.18$]{chitour:hal-03827918} in the particular case where the $q_i$ belongs to $\Omega_{-}^{T}$ instead of $M(\mathbb{R}_-)$ for all $i=1,...,N$. Hence, Remark~$5.19$ and the discussion just below in the paper \cite{chitour:hal-03827918} allow us to conclude that \cite[Conjecture~$5.18$]{chitour:hal-03827918} is true for $q=1$, which is the result that we wanted. 
%  a particular case. We define $\Lambda_i:=r_N-r_i$ for $i=1,\dots,N$.  Let $K \in \mathbb{N}$, $T>0$ and $q_j \in  M(\mathbb{R}_-)$ for $j=1,\dots,K$ such that:
% \begin{equation}
% \label{eq_alm_peri}
%     q_j=\sum_{k \in \mathbb{N}}q_{j,\, k} \delta_{-\lambda_k},\quad  \lambda_k \in \Lambda_T,\quad q_{j,\,k} \in \mathbb{R}, \quad k\in \mathbb{N}.
% \end{equation}
% We have for $s \in \mathbb{C}$

%  \begin{equation}
%      (\delta_{\Lambda_i-\Lambda_N})_{i=1,\dots,N}.
%  \end{equation}
 
%  Assume the existence of a $c>0$ such that
% \begin{equation}
% \label{eq:lemm_gen_Y_corona1}
%     \sum_{j=1}^k \abs{\widehat{q}_j(p)} \ge c>0, \quad p \in \mathbb{C}.
% \end{equation}
%  Then there exists $p_j \in  M(\mathbb{R}_-)$ for $j=1,\dots,k$ satisfying the equation
% \begin{equation*}
% %\label{eq:lemm_gen_Y_corona2}
%     \sum_{j=1}^k \widehat{q}_j(p)\widehat{p}_j(p) =1, \quad p \in \mathbb{C}.
% \end{equation*}
 
\end{proof}

 \section*{Acknowledgement: } 
 % The author is indebted to 
 % Y. Chitour for fundamental insigths  that improved drastically the paper. 
The authors would like to thank N. Nikolski for discussions on corona problems for measures and Brett D. Wick for literature references on corona theorems for almost periodic functions. 

\bibliographystyle{abbrv}

%This calls all references from the .bib
%\nocite{*}

%  This inserts the bib file
\bibliography{ifacconf}

\end{document}